\newcommand{\toitself}[1]{#1\righttoleftarrow}
\theoremstyle{plain}
\newtheorem{theorem}{Theorem}[section]
\newtheorem{lemma}[theorem]{Lemma}
\newtheorem{prop}[theorem]{Proposition}
\newtheorem{corollary}[theorem]{Corollary}
\theoremstyle{definition}
\newtheorem{defi}[theorem]{Definition}
\newtheorem{remark}[theorem]{Remark}
\renewenvironment{proof}[1][.]{%
\bigskip\noindent{\bf Proof#1 }}{%
\hfill$\blacksquare$\bigskip}
\newcommand{\N}{\mathbb{N}}
\newcommand{\R}{\mathbb{R}}
\newcommand{\mcal}[1]{\mathcal{#1}}
\newcommand{\mscr}[1]{\mathscr{#1}}
\newcommand{\ds}{\displaystyle}
\newcommand{\q}{\text{q}\hspace{0.05em}}
\newcommand{\Px}[1][\q]{\,\text{P}_{x}^{\hspace*{0.03cm}#1}\hspace{0.05em}}
\newcommand{\Pxx}[2][\q]{\,\text{P}_{#2}^{\hspace*{0.03cm}#1}\hspace{0.05em}}
\newcommand{\dgamma}{\,\text{d}\gamma}
\newcommand{\dq}{\,\text{d}\q}
\newcommand{\dmu}{\,\text{d}\mu}
\newcommand{\dnu}{\,\text{d}\nu}
\newcommand{\dP}[1][\q]{\,\text{dP}_{x}^{\hspace*{0.03cm}#1}\hspace{0.01em}}
\newcommand{\dPx}[2][\q]{\,\text{dP}_{#2}^{\hspace*{0.03cm}#1}\hspace{0.01em}}
\newcommand{\X}{\textbf{X}}
\newcommand{\Y}{\textbf{Y}}
\newcommand{\HR}{\mcal{H}\left(\mcal{R}\right)}
\newcommand{\Rq}[1][\q]{\mcal{R}_{#1}}
\newcommand{\Lq}[1][\q]{\mcal{L}_{#1}}
\newcommand{\Bq}[1][\q]{\text{B}\hspace{0.05em}_{#1}}
\newcommand{\Bu}[1][\mu]{\text{B}\hspace{0.05em}_{#1}}
\newcommand{\Ruelle}{L\hspace{0.01em}}
\newcommand{\p}{\text{p}\hspace{0.05em}}
\newcommand{\dpp}{\,\text{d}\p}
\newcommand{\Rp}{\mcal{R}_{\p}}
\newcommand{\supqx}{\sup\q}
\newcommand{\infqx}{\inf\q}
\newcommand{\df}[1][\hspace{-0.15em}]{\text{d}_{#1}f}
\newcommand{\hv}[1][\mu]{h_{\text{v}}^{#1}}
\newcommand{\ha}[1][\mu]{h_{\text{a}}^{#1}}
\def\moverlay{\mathpalette\mov@rlay}
\def\mov@rlay#1#2{\leavevmode\vtop{%
    \baselineskip\z@skip\lineskiplimit-\maxdimen%
    \ialign{\hfil$\m@th#1##$\hfil\cr#2\crcr}}}
\newcommand{\charfusion}[3][\mathord]{
    #1{\ifx#1\mathop\vphantom{#2}\fi
        \mathpalette\mov@rlay{#2\cr#3}
      }
    \ifx#1\mathop\expandafter\displaylimits\fi}
\newcommand\restr[2]{{
  \left.\kern-\nulldelimiterspace
  #1 
  \vphantom{\big|} 
  \right|_{#2} 
  }}
\newcommand\norm[1]{\left\lVert#1\right\rVert}
\newcommand\abs[1]{\left\lvert#1\right\rvert}
\newcommand\intervalco[1]{[#1)}
\newcommand\email[1]{\,\footnotesize\texttt{#1}}
\title{Thermodynamic formalism for general iterated function systems with
measures}
\author[*]{Jader E. Brasil}
\author[$\dagger$]{Elismar R. Oliveira}
\author[$\ddagger$]{Rafael Rig\~ao Souza}
\affil[$\,$]{
    \footnotesize{Instituto de Matem\'atica e Estat\'istica - UFRGS}\\
    \footnotesize{Av. Bento Gon\c calves, 9500}\\
    \footnotesize 91509-900, Porto Alegre - RS, Brazil
    \vspace{10px}
}
\affil[*]{\email{jaderebrasil@gmail.com}}
\affil[$\dagger$]{\email{elismar.oliveira@ufrgs.br}}
\affil[$\ddagger$]{\email{rafars@mat.ufrgs.br}}
\date{\vspace{-20px}\small\today}
\begin{document}
    \makeatletter
    \def\blfootnote{\gdef\@thefnmark{}\@footnotetext}
    \let\@fnsymbol\@roman
    \makeatother

\maketitle

\begin{abstract}
    This paper introduces a theory of Thermodynamic Formalism
for Iterated Function Systems with Measures (IFSm). We study the spectral
properties of the Transfer and Markov operators associated to a IFSm.
We introduce variational formulations for the topological entropy
of holonomic measures and the topological pressure of IFSm given by a potential.
A definition of equilibrium state is then natural and we prove
its existence for any continuous potential.
We show, in this setting, a uniqueness result for the equilibrium state
requiring only the G\^ateaux differentiability
of the pressure functional.
\end{abstract}

\noindent\textbf{MSC2010}: 37D35, 37B40, 28Dxx.\\
\textbf{Keywords}: Iterated Function System, Thermodynamic Formalism,
Ergodic Theory, Transfer Operator, Entropy, Pressure, Equilibrium States.

\section{Introduction}
The modern study of Iterated Function Systems (IFS for short) come back to the early 80's with the works of J. Hutchinson \cite{hutchinson1981fractals} and M. Barnsley \cite{BarnDemko85}. In these papers, the theory was unified both in the geometric and the analytical point of view, generating what we call today the Hutchinson-Barnsley theory for IFS. An IFS is a family of maps acting from a set to itself. Under good contraction hypothesis there exist an invariant compact set  called the fractal attractor. Moreover, if we add weights having good continuity hypothesis, the IFS acts on probabilities having an invariant probability whose support is the fractal attractor set. We observe that several works on geometric features of fractals were done in the previous decades by Mandelbrot and others, but after the 80's the IFS assumed the central role in the generation and study of fractals and its applications.

For a dynamical system given by a map $T: X \to X$, an initial point $x_0 \in X$ is iterated by $T$ producing the orbit $\{x_0, T(x_0), T^2(x_0), ... \}$, whose limit or the cluster points are the objects of main interest, from a dynamical point of view. On the other hand, for an IFS $(X, \tau_\theta)_{\theta \in \Theta}$, we iterate the initial point by choosing at each step a possibly different map $\tau_{\theta}: X\to X$, indexed by the generally finite set $\Theta$, producing multiple orbits $\{Z_j, j \geq 0\}=\{Z_0=x_0, Z_1=\tau_{\theta_0}(x_0), Z_2= \tau_{\theta_1}(\tau_{\theta_0}(x_0)), ... \}$. We notice that the orbit is now a set of orbits controlled by the sequence $(\theta_0, \theta_1, ...) \in \Theta^{\mathbb{N}}$. To avoid this complication Hutchinson defined the fractal operator $F: K(X) \to K(X)$ by
$$F(B)=\bigcup_{\theta \in \Theta}\tau_{\theta}(B),$$
for $B \in K(X)$, where $K(X)$ is the family of nonempty compact sets of $X$. This operator is called the Hutchinson-Barnsley operator and a compact set is invariant or fractal if $F(\Omega)=\Omega$. Additionally $\Omega$ is a fractal attractor if the orbit of $B$ by $F$, given by $\{B, F(B), F^2(B), ... \}$ converge, w.r.t. the Hausdorff-Pompeiu metric to $\Omega$, for any $B \in K(X)$ (see \cite{berinde2013role} for details on the Hausdorff-Pompeiu metric).


Other possible point of view to understand the dynamics of an IFS is the probabilistic one. Suppose
$X$ is now a metric space and let $\mathcal{P}(X)$ be the set of Borel probability measures defined on $X$.
In this case we consider that, in each step the function to be iterated is chosen according to some probability, which produces a stochastic process $\{X_{0}, X_{1}, X_{2}, ...\}$ where each $X_{j+1}\in X$ is randomly chosen according to a distribution which is obtained from the previous $X_{j}$ by a transition kernel using the IFS law. To illustrate that, we consider the classical case of IFS with constant probabilities studied by Hutchinson, Barnsley and many others in the beginnings of the 80's: we consider $\Theta=\{1,2,..., n\}$, meaning that we have a finite  number of maps, and each one is chosen according to a probability $p_j>0$ where $p_1+ \cdots + p_n=1$, constituting an IFS with probabilities (IFSp for short). We denote by $C(X,\R)$ the Banach space of all real continuous functions equipped with supremum norm
$\| \; \|_{\infty}$. Under this conditions the classic transfer operator (also called Ruelle operator, see Ruelle~\cite{ruelle1967variational,ruelle1968statistical}, Walters~\cite{walters1975ruelle} and Fan~\cite{fan1999iterated}) is given by
\begin{equation}\label{eq:class_transf_op}
  B_{q}(f)(x)=\int_{\Theta} f(\tau_{\theta}(x)) dq_{x}(\theta)= \sum_{j=1}^{n} p_j \, f(\tau_{j}(x)),
\end{equation}
for any $f \in C(X,\R)$, where the measure $q_{x}$ is given by $dq_{x}(\theta)= \sum_{j=1}^{n} p_j  \delta_{j}(\theta), \; \forall x \in X$.
The dual of $B_{q}$ is a Markov operator (see \cite{Doob48}, \cite{Lasota2002} and\cite{BarnDemko85}, for details on Markov operators and its connection with IFS) acting on $\mu$. The operator $\mathcal{L}_{q}(\mu):=B_{q}^*(\mu)$, is implicitly defined by the property
\begin{equation}\label{eq:dual IFS}
   \int_{X} f d\mathcal{L}_{q}(\mu)=  \int_{X} B_{q}(f)(x) d\mu,
\end{equation}
for any $f \in C(X,\R)$.
Given an initial distribution $\mu_{0}$, we iterate it by the Markov operator $M: \mathcal{P}(X) \to \mathcal{P}(X)$ obtaining the distributions  $\mu_{0}, \mu_{1}=\mathcal{L}_{q}(\mu_{0}), \mu_{2}=\mathcal{L}_{q}^2(\mu_{0}), ... \in \mathcal{P}(X)$. We have $X_n \sim \mu_n$ for all $n \geq 0$.  Analogously to the fractal attractor, we say that $\mu \in \mathcal{P}(X)$ is an invariant measure if $\mathcal{L}_{q}(\mu)=\mu$ and that $\mu \in \mathcal{P}(X)$ is an attracting invariant measure (or Hutchinson-Barnsley measure) if $\mathcal{L}_{q}^j(\mu_{0})$ converge to $\mu$ w.r.t. the Monge-Kantorovich metric(see \cite{hutchinson1981fractals}), for any $\mu_{0}\in \mathcal{P}(X)$. It is possible to prove that the support of the invariant attracting measure is the fractal attractor (see \cite{hutchinson1981fractals}).

The final feature of an IFS dynamics we need to understand is the connections between IFS orbits and  invariant measures. The first connection is given by a celebrated result due to M. Barnsley, known as the Chaos Game Theorem (CGT for short), which claims that, from the initial probabilities $p_j$'s, we can build a probability $\mathbb{P}$ over the space $\Theta^{\mathbb{N}}$ such that for $\mathbb{P}$-a.e. $(\theta_0, \theta_1, ...) \subset \Theta^{\mathbb{N}}$ the corresponding orbit $\{x_0, \tau_{\theta_0}(x_0), \tau_{\theta_1}(\tau_{\theta_0}(x_0)), ... \}$ approximate the fractal attractor $\Omega$, for any initial point $x_0$. The second connection is given by the Elton's Ergodic Theorem (EET for short) \cite{elton1987ergodic} claiming that, from the initial probabilities $p_j$'s, we can build a probability $\mathbb{P}$ over the space $\Theta^{\mathbb{N}}$ such that $\mathbb{P}$-a.e. $(\theta_0, \theta_1, ...) \subset \Theta^{\mathbb{N}}$, the corresponding asymptotic average of visits of the orbit $\{x_0, \tau_{\theta_0}(x_0), \tau_{\theta_1}(\tau_{\theta_0}(x_0)), ... \}$ to a measurable set $B \subset X$ is equal to $\mu(B)$, if $\mu(\partial(B))=0$, analogously to the usual Birkhoff ergodic theorem for a single map, where $\mu$ is the invariant measure of the IFS in consideration. For continuous functions it means that
$$\frac{1}{N} \left(f(x_0)+f(\tau_{\theta_0}(x_0))+\cdots + f(\tau_{\theta_{N-1}}( \cdots\tau_{\theta_0}(x_0)))\right) \to \int_X f d\mu,$$
for any $f \in C(X,\R)$, as $N \to \infty$. In other words
$$\frac{1}{N} \left(\delta_{x_0}+\delta_{\tau_{\theta_0}(x_0)}+\cdots + \delta_{\tau_{\theta_{N-1}}( \cdots\tau_{\theta_0}(x_0))}\right) \to \mu,$$
in distribution. In synthesis, the CGT and the EET are random procedures to approximate the fractal attractor and the invariant measure, respectively.

The study of the conditions under which we have, for a given IFS, a fractal
attractor which is the support of an invariant measure is called the
Hutchinson-Barnsley theory. Such conditions have been extremely relaxed and generalized in several ways in the past forty years. A first generalization, yet for $\Theta=\{1,2,..., n\}$, was for IFSp where the constant probabilities $p_j>0$ such that $p_1+ \cdots + p_n=1$ were replaced by variable probabilities $p_j(x)>0$ where $p_1(x)+ \cdots + p_n(x)=1$ for all $x \in X$. Now, the transfer operator is defined by
\begin{equation}\label{eq:class_variable_transf_op}
  B_{q}(f)(x)=\int_{\Theta} f(\tau_{\theta}(x)) dq_{x}(\theta)= \sum_{j=1}^{n} p_j(x) \, f(\tau_{j}(x)),
\end{equation}
where the measure $q_{x}$ is given by $dq_{x}(\theta)= \sum_{j=1}^{n} p_j(x)  \delta_{j}(\theta), \; \forall x \in X$,  for any $f \in C(X,\R)$, Very general conditions for the existence of the invariant measure for such IFS are given in \cite{barnsley1988invariant}. We point out that the EET was also proved for variable probabilities and finite functions in \cite{elton1987ergodic}.

In Fan~\cite{fan1999iterated}, 1999, the condition $p_1(x)+ \cdots + p_n(x)=1$ is finally dropped assuming only that each $p_\theta(x)\geq 0$ for $\theta \in \{1,...,n\}$. In this work Fan study a contractive system which is a triplet  $(X, \tau_\theta, p_\theta )_{\theta \in \{1,...,n\}}$, where each $\tau_\theta$ is a contractive map and each $p_\theta(x)\geq 0$ for $\theta=1,...,n$, generalizing the notion of IFS with probabilities. In this setting, Fan proves a Ruelle-Perron-Frobenius theorem (RPF theorem, for short), meaning the existence of a positive eigenfunction for the operator $B_{q}$ and an eigenmeasure for the dual operator $B_{q}^*$ with the same eigenvalue which is the spectral radius of $B_{q}$.

The next key improvement was given by Stenflo~\cite{stenflo2002uniqueness}, where random iterations are used to represent the iterations of a so called IFS with probabilities, $(X, \tau_\theta, \mu)_{\theta \in \Theta}$ for an arbitrary measurable space $\Theta$. The approach here is slightly different from the previous works on IFS with probabilities, instead considering weights, the iterations from $Z_0 \in X$ are $Z_{j+1}=\tau_{I_j}(Z_{j})$ governed by a sequence of i.i.d variables $\{I_j \in \Theta\}_{j \in \mathbb{N}}$, with distribution $\mu$, generating a Markov chain $\{Z_{j},\; j \geq 0\}$ with transfer operator given by
\begin{equation}\label{eq:stenflo_transf_op}
  B_{q}(f)(x)=\int_{\Theta} f(\tau_{\theta}(x)) dq_{x}(\theta)= \sum_{j=1}^{n} p_j(x) \, f(\tau_{j}(x)),
\end{equation}
where the measure $q_{x}$ is given by $dq_{x}(\theta)= d\mu(\theta), \; \forall x \in X$,  the for any $f \in C(X,\R)$, The main goal of Stenflo~\cite{stenflo2002uniqueness} is to establish, when $B_{q}$ is Feller, the existence of  an unique attracting invariant measure $\pi$, for this Markov chain.

The approach in our paper is a generalization of Stenflo~\cite{stenflo2002uniqueness}, \eqref{eq:stenflo_transf_op}, and all classical settings \eqref{eq:class_transf_op} and \eqref{eq:class_variable_transf_op}. We take a family $q_{x}(\cdot) \in \mathcal{M}(\Theta)$,
indexed by  $x \in X$, generating a Markov chain with transfer operator given by
$$B_q(f)(x)=\int_{\Theta} f(\tau_\theta(x)) dq_{x}(\theta),$$
for any $f \in C(X,\R)$. The meaning of the distribution $q_{x}(\cdot) \in \mathcal{M}(\Theta)$ is such that, the position $x$ of a previous iteration of the IFS determine the distribution $q_{x}(\cdot)$ of $\theta$ used to choose the function $\tau_\theta$ and produce the new point $\tau_\theta(x)$. When $q_{x}=\mu$, for any $x \in X$, is a constant distribution we recover the setting from Stenflo~\cite{stenflo2002uniqueness}.

In our setting
the IFSm $(X, \tau_\theta, q)_{\theta \in \Theta}$ can be studied as the sample paths of the Markov process $\{Z_{j},\; j \geq 0\}$ with initial distribution $\mu_0=\mu \in \mathcal{M}(X)$ and  $\mu_{j+1}= \mcal{L}_{  q}(\mu_{j})$, where for any $\nu \in \mathcal{M}(X)$,
$$\int_{X} f(x) d\mcal{L}_{  q}(\nu)(x)=  \int_{X} B_q(f)(x) d\nu(x),$$
for any $f \in C(X,\R)$. Such degree of generality is necessary to enlarge the range of application for the IFS theory, specially the thermodynamic formalism. In Section \ref{application_economics} we present a situation where we believe the tools developed in the previous sections can be applied when analyzing
an interesting problem in economics.

Our goal is to present a complete theory of thermodynamical formalism for these IFS with measures, that is, good definitions for transfer operators, invariant measures, entropy, pressure, equilibrium measures and a variational principle. Finally, we want to use these tools to characterize the solutions of the ergodic optimization problem.

For sake of completeness we would like to point out that we do not prove a  RPF theorem for those systems, only the existence of positive eigenfunctions, but we establish all the results that can be derived if we have assumed such a property. To the best of our knowledge the RPF theorem for IFSm has not been established an it is a very hard problem. There are several works on the matter of finding IFS for which the RPF theorem holds.  Those IFS are said to have the RPF property.  In 2009 Lopes and Oliveira~\cite{lopes2009entropy} studied those systems renaming it as weighted systems or IFS with weights, having the RPF property, producing a self contained notion of  entropy and topological pressure through a variational principle for holonomic measures allowing to establish a thermodynamical formalism for IFS. Other approaches for IFS thermodynamic formalism were developed by Urba\'{n}ski~\cite{simon2001invariant,mauldin2000parabolic,hanus2002thermodynamic}  and many others.

It's worth to mention that, in  Urba\'nski et al.~\cite{hanus2002thermodynamic} a thermodynamic formalism for conformal infinite (countable) iterated function systems is presented using the conformal structure via partition functions.
Also in K\"aenm\"aki~\cite{kaenmaki2004natural} a thermodynamical formalism for IFS is studied with the help of cylinder functions, where general IFS means that $(X, \tau_\theta)_{\theta \in \Theta}$ and $\Theta$ is the increasing union of finite alphabets. In Lopes et al.~\cite{lopes2015entropy} a thermodynamic formalism for shift spaces, taking values on a compact metric space is presented, although this problem is closely related to thermodynamic formalism for IFS when we associate the pre images of the shift map with a respective maps producing an infinite IFS. Also in \cite{aguiar2018variational} a variational principle for the specific entropy on the context of
symbolic dynamics of compact metric space alphabets was developed generalizing somehow the results in \cite{lopes2009entropy}.

In our work we will extend the variational results in Lopes and Oliveira~\cite{lopes2009entropy} and, more recently, the preprint Cioletti and Oliveira~\cite{CiolOLivArxiv2017},  to a general IFS  called IFS with measures (IFSm), $(X, \tau_\theta, q)_{\theta \in \Theta}$ for an arbitrary compact space $\Theta$ (see Dumitru~\cite{dumitru2013attractors} for the Hutchinson-Barnsley theory for such infinite systems or Lukawska~\cite{gwozdz2005hutchinson} for infinite countable ones. Other related articles are \cite{MR1302367, MR2984148, MR2506506, MR1418125, MR1387085, MR3272642}). We point out that we do not use partitions to construct our entropy, only an integral formulation, and the variational principle is taken over the holonomic measures which enclose the invariant ones.

The structure of the paper is the following: in Section \ref{basics}, we present the basic definitions on IFS with measures (IFSm) and a fundamental result about the eigenspace associated to the maximal eigenvalue of the transfer operator. We also prove the existence of a positive eigenfunction for the transfer operator associated to the spectral radius and give a constructive proof of the existence of equilibrium states.
In Section \ref{Markov_Operator} we define the Markov operator, which in the case of a normalized IFSm gives the evolution of the distribution of the associated Markov process, and show that the set of eigenmeasures for it is non-empty.
In Section \ref{sec-holonomic}, we introduce holonomic measures, which play the role of invariant measures in the IFS setting.
In Section \ref{entropy-and-pressure} we define entropy for a IFSm, the topological pressure of a given potential function, as well as the concept of equilibrium states.
In a remark in the end of this section we show how the classical thermodynamical formalism for a dynamical system is a particular case of the  IFSm Thermodynamic Formalism.
In Section \ref{uniqueness-for-eqstates}
a uniqueness result for the equilibrium states  is obtained.
Finally, in Section \ref{application_economics} we present a possible application in economic theory of the theory developed in the previous sections.

\section{IFSm and Transfer Operator}\label{basics}
In this section we set up the basic notation and
present a fundamental result about the eigenspace associated to
the maximal eigenvalue (or spectral radius) of the transfer operator.

\bigskip

In this paper $\X,\Theta$ are compact metric spaces, equipped with $\mscr{B}(\X)$ and $\mscr{B}(\Theta)$ respectively the Borel $\sigma$-algebra for $\X$ and $\Theta$.

The Banach space of all real continuous functions equipped with supremum norm is denoted by $C(\X,\R)$.
Its topological dual, as usual, is identified with $\mscr{M}_{s}(\X)$,
the space of all finite Borel signed measures endowed with
total variation norm. We use the notation $\mscr{M}_{1}(X)$
for the set of all Borel probability measures over $X$
supplied with the weak-$*$ topology. Since we are assuming
that $X$ is a compact metric space then we have that the topological
space $\mscr{M}_{1}(X)$ is compact and metrizable.

Take $\q={(\q_{x})}_{x\in\X}$ a collection of measures on $\mscr{B}(\Theta)$, such that
\begin{enumerate}[label= (\textit{$\q$\arabic*})]
  \item $\ds\supqx\equiv\sup_{x\in X}\q_{x}(\Theta) < \infty$,
  \item $\ds\infqx\equiv\inf_{x\in\X}\q_{x}(\Theta) > 0$,
  \item\label{q-measurable} $\ds x \mapsto \q_{x}(A)$ is a Borel map, i.e, is $\mscr{B}(\X)$-measurable for all fixed $A\in\mscr{B}(\Theta)$,
  \item\label{q theta continuous} $\ds x \mapsto \q_{x}$ is weak$^{*}$-continuous.
\end{enumerate}

An Iterated Function System with measures $q$, IFSm for short, is a triple
$\Rq=(\X, \tau, \q)$, where $\tau={(\tau_{\theta})}_{\theta\in\Theta}$ is a collection of functions from $\X$ to itself with the following property
\begin{enumerate}[label= (\textit{$\tau$\arabic*})]
  \item\label{tau continuous} $\tau: (\Theta, \X) \mapsto \X$, where $\tau(\theta, x) = \tau_{\theta}(x)$ is continuous.
\end{enumerate}

The $\Rq$ is said to be {\bf normalized} if for all $x\in\X$, $\q_{x}$ is a probability measure.

\begin{defi} Let $\Rq=(\X,\tau,\q)$ be an IFSm.
The \textbf{Transfer Operator} $\Bq:\toitself{C(\X,\R)}$ associated to
$\mcal{R}_{\q}$ is defined by:
  \[
    \Bq(f)(x) = \int_{\Theta}f(\tau_{\theta}(x))\dq_{x}(\theta),
    \qquad \forall x \in X.
  \]
\end{defi}
$\Bq$ is well defined. In fact, $\Bq$ is continuous once that
\[
  \norm{\Bq(f)}_{\infty}
  = \sup_{x}\abs{\int\,f(\tau_{\theta}(x))\dq_{x}(\theta)}
  \le \supqx\norm{f}_{\infty}
  < \infty.
\]
Futhermore, for a fixed $f \in C(\X,\R)$ and $x\in\X$, given $\varepsilon > 0$, take $\delta>0$ s.t.
\[
\sup_{\theta\in\Theta}d(f(\tau_{\theta}(x)),f(\tau_{\theta}(y)))<\frac{\varepsilon}{2\supqx},
\]
and
\[
  \abs{\int_{\Theta}f(\tau_{\theta}(x))d\q_{x}(\theta) - \int_{\Theta}f(\tau_{\theta}(x))d\q_{y}(\theta)} < \frac{\varepsilon}{2},
\]
for all $y\in\X$ with $d(x,y) < \delta$. Then,

\begin{align*}
  \vert&\Bq(f)(x) - \Bq(f)(y)\vert
  = \abs{\int_{\Theta\hspace{-6px}}f(\tau_{\theta}(x))\dq_{x}(\theta) - \int_{\Theta\hspace{-6px}}f(\tau_{\theta}(y))\dq_{y}(\theta)}\\
  &\le \int_{\Theta\hspace{-6px}}\abs{f(\tau_{\theta}(x))-f(\tau_{\theta}(y))}\dq_{x}(\theta)
  +\abs{\int_{\Theta\hspace{-6px}}f(\tau_{\theta}(y))\dq_{x}(\theta)-
  \int_{\Theta\hspace{-6px}}f(\tau_{\theta}(y))\dq_{y}(\theta)}\\
  &<\frac{\varepsilon}{2\supqx}\int_{\Theta}\dq_{x}(\theta)
  + \frac{\varepsilon}{2}
  = \frac{\varepsilon}{2} + \frac{\varepsilon}{2}=\varepsilon.
\end{align*}

This shows that, for $f\in C(\X,\R)$ and $x\in\X$, given $\varepsilon>0$, there is $\delta>0$ such that for every
$d(x,y)<\delta$, $\abs{\Bq(f)(x) - \Bq(f)(y)} < \epsilon$, therefore $\Bq(f)(x)$ is continuous.
\bigskip

When dealing with the spectral radius of a positive operator, via Gelfand's formula, we will need to consider the norm of iterates of such operator, which is defined by a supremum which is attained in the constant function, as we will see in the proof of Theorem~\ref{powers-description}. For this reason we will need the following proposition:

\begin{prop}\label{Bq-powers1}
	Let $\Rq=(\X, \tau, \q)$ be a IFSm.
	Then for the $N$-th iteration of $\Bq$ we have
	\[
	\Bq^{N}(1)(x) =
	\int_{\Theta^{N}}
	\dP(\theta_{0},\ldots, \theta_{N-1})
	\]
	where,
  \vspace{5px}

	$\ds\dP(\theta_{0},\ldots, \theta_{N-1})
	\equiv \prod_{j=1}^{N}\dq_{x_{N-j}}(\theta_{N-j})$,
	$x_0=x$ and $x_{j+1}=\tau_{\theta_{j}}x_j$.
\end{prop}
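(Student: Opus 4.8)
The plan is to prove the identity by induction on $N$. The base case $N=1$ is immediate: by the definition of the transfer operator, $\Bq(1)(x)=\int_{\Theta}1\,\dq_{x}(\theta)=\q_{x}(\Theta)$, and on the right-hand side $\dP(\theta_{0})=\dq_{x_{0}}(\theta_{0})=\dq_{x}(\theta_{0})$, so $\int_{\Theta}\dP(\theta_{0})=\q_{x}(\Theta)$ as well.

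For the inductive step, assume the formula holds for some $N\ge 1$ and write $\Bq^{N+1}=\Bq\circ\Bq^{N}$. By the definition of $\Bq$,
\[
  \Bq^{N+1}(1)(x)=\int_{\Theta}\Bq^{N}(1)\bigl(\tau_{\theta}(x)\bigr)\,\dq_{x}(\theta),
\]
and this integral is well defined because $\Bq$ maps $C(\X,\R)$ into itself (as shown just after its definition), so $\Bq^{N}(1)\in C(\X,\R)$, while by \ref{tau continuous} the map $\theta\mapsto\Bq^{N}(1)(\tau_{\theta}(x))$ is continuous, hence bounded and Borel. Now rename $\theta$ as $\theta_{0}$, set $x_{0}=x$ and $x_{1}=\tau_{\theta_{0}}(x)$, and apply the induction hypothesis at the point $x_{1}$, renaming its $N$ dummy variables $\theta_{1},\dots,\theta_{N}$ and its auxiliary points $x_{2},\dots,x_{N}$ so that $x_{j+1}=\tau_{\theta_{j}}(x_{j})$ for $0\le j\le N-1$. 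This gives
\[
  \Bq^{N}(1)(x_{1})=\int_{\Theta^{N}}\ \prod_{j=1}^{N}\dq_{x_{N+1-j}}(\theta_{N+1-j}),
\]
the right-hand side being the $N$-fold iterated integral against $\q$ (with $\theta_{N}$ innermost and $\theta_{1}$ outermost). Substituting this into the previous display merely prepends one further integration, over $\theta_{0}$ against $\dq_{x_{0}}$, and by the definition of an iterated integral the outcome is
\[
  \Bq^{N+1}(1)(x)=\int_{\Theta^{N+1}}\ \prod_{j=1}^{N+1}\dq_{x_{(N+1)-j}}(\theta_{(N+1)-j})=\int_{\Theta^{N+1}}\dP(\theta_{0},\dots,\theta_{N}),
\]
which is exactly the asserted formula for $N+1$, closing the induction.

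I do not anticipate a serious obstacle: once $\Bq$ is known to preserve $C(\X,\R)$, the argument is essentially index bookkeeping. The two points worth a line of care are (i) keeping the indices in $\prod_{j=1}^{N}\dq_{x_{N-j}}(\theta_{N-j})$ aligned when the extra outer integration is inserted — above this is the reindexing $\theta_{j}\leftrightarrow\theta_{j+1}$, $x_{j}\leftrightarrow x_{j+1}$ under which the induction hypothesis is invoked — and (ii) if one insists on reading $\int_{\Theta^{N}}\dP(\dots)$ literally as integration against a single Borel measure on $\Theta^{N}$ rather than as the iterated integral above, one must first note that this measure exists, which is the standard construction of a measure from a measurable transition kernel and uses exactly the measurability hypothesis \ref{q-measurable} on $x\mapsto\q_{x}(A)$ together with the continuity in \ref{tau continuous}; Tonelli's theorem (nonnegative integrands, total masses bounded by $\supqx$) then shows that the order of integration is irrelevant.
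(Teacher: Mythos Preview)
Your proof is correct and follows essentially the same approach as the paper: induction on $N$, writing $\Bq^{N+1}(1)(x)=\int_{\Theta}\Bq^{N}(1)(\tau_{\theta_0}(x))\,\dq_{x}(\theta_0)$, applying the induction hypothesis at $x_1=\tau_{\theta_0}(x)$, and reindexing. The only cosmetic differences are that the paper starts its base case at $N=2$ rather than $N=1$, and it defers your remark~(ii) about $\Px$ as a bona fide measure on $\Theta^N$ to a separate remark immediately following the proposition.
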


\bigskip

\begin{proof}
  This expression can be obtained by proceeding a formal induction on $N$.
  For $N=2$ and $x=x_{0}$, we have

\begin{align*}
  \Bq^{2}(1)(x_{0})&=\int_{\Theta} \Bq(1)(\tau_{\theta_{0}}(x_{0}))\dq_{x_{0}}(\theta_{0})\\
  &=\int_{\Theta}\int_{\Theta}\dq_{x_{1}}(\theta_{1})\dq_{x_{0}}(\theta_{0})\\
  &=\int_{\Theta^{2}}\dP(\theta_{0},\theta_{1}).
\end{align*}

And, if
\[\Bq^{N}(1)(x) = \int_{\Theta^{N}}\dP(\theta_{0},\ldots,\theta_{N-1}),\]

then
\begin{align*}
  \Bq^{N+1}(1)(x)&= \int_{\Theta}\Bq^{N}(1)(x_{1})\dq(\theta_{0})\\
  &=\int_{\Theta}\int_{\Theta^{N}}\dPx{x_{1}}(\theta_{1},\ldots,\theta_{N})\dq(\theta_{0})\\
&=\int_{\Theta}\cdots\int_{\Theta}\left(\prod_{j=0}^{N-1}\dq_{x_{N-j}}(\theta_{N-j})\right)\dq(\theta_{0})\\
&=\int_{\Theta}\cdots\int_{\Theta}\left(\prod_{j=1}^{N+1}\dq_{x_{N+1-j}}(\theta_{N+1-j})\right)\\
&=\int_{\Theta^{N}}\dP(\theta_{0},\ldots\,\theta_{N}).
\end{align*}
\end{proof}

\begin{remark}
  The formal notation used for $\Px$, in fact, means that $\Px$ is a measure in $\Theta^{N}$ defined by,

  \[\Px(\Theta_{0}\times\cdots\times\Theta_{N-1}) = \int_{\Theta_{0}}\cdots\int_{\Theta_{N-1}}\dq_{x_{N-1}}(\theta_{N-1})\cdots\dq_{x_{0}}(\theta_{0}).\]

  In the case $N=2$ for instance,
  \begin{align*}
    \Px(\Theta_{0}\times\Theta_{1}) &=\int_{\Theta_{0}}\int_{\Theta_{1}}\dq_{\tau_{\theta_{0}}x}(\theta_{1})\dq_{x}(\theta_{0})\\
    &= \int_{\Theta_{0}}\q_{\tau_{\theta_{0}}x}(\Theta_{1})\dq_{x}(\theta_{0}).
  \end{align*}

  Note that $\q_{\tau_{\theta_{0}}(x_{0})}(\Theta_{1})$, with fixed $\Theta_{1}$ and $x_{0}$, is a function of $\theta_{0}$ that is measurable: indeed, if $A\in\mscr{B}(\Theta)$, $f_{A}:\X\to\R$ defined by $f_{A}(x) = q_{x}(A)$ is measurable by $\ref{q-measurable}$ and by $\ref{tau continuous}$ implies $\tau$ is measurable. Thus, $F_{A} := f_{A}\circ \tau$ is measurable.

  \begin{prop}\label{prop-measurable-qx}
    If $f:\X\to\R$ is a measurable nonnegative function, then

    \[H(x) := \int_{\Theta_{0}} f\circ\tau(\theta, x)\dq_{x}(\theta),\]

    is \textit{measurable}.
  \end{prop}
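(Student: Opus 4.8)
The plan is to prove measurability of $H$ by the standard "bootstrap from indicators" argument, reducing the general nonnegative measurable $f$ to simple functions and then to indicator functions, where we can invoke the measurability facts already established in the remark. First I would handle the case $f = \charfun{A}$ for $A \in \mscr{B}(\X)$. Then $f\circ\tau(\theta,x) = \charfun{A}(\tau_\theta(x)) = \charfun{\tau^{-1}(A)}(\theta,x)$, and since $\tau$ is continuous (hence Borel measurable) by \ref{tau continuous}, the set $\tau^{-1}(A)$ is Borel in $\Theta \times \X$. Writing $B_x := \{\theta \in \Theta : (\theta,x) \in \tau^{-1}(A)\}$ for the $x$-section, we get $H(x) = \q_x(B_x)$. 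The point established in the remark (via $F_A = f_A\circ\tau$ measurable) is exactly that $x \mapsto \q_x(B_x)$ is $\mscr{B}(\X)$-measurable when $B_x = \tau_\theta^{-1}$-type sections arising from a fixed Borel set; more carefully, one shows the class of Borel sets $A \subseteq \X$ for which $x \mapsto \q_x(\{\theta : \tau_\theta(x)\in A\})$ is measurable forms a $\lambda$-system containing the open sets, hence is all of $\mscr{B}(\X)$ by Dynkin's lemma, using hypotheses \ref{q-measurable} and \ref{q theta continuous} to handle generators.

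Next I would pass to simple functions: if $f = \sum_{i=1}^{n} c_i \charfun{A_i}$ with $c_i \ge 0$ and $A_i \in \mscr{B}(\X)$, then by linearity of the integral $H(x) = \sum_{i=1}^n c_i \int_{\Theta} \charfun{A_i}(\tau_\theta(x))\dq_x(\theta)$, a finite sum of measurable functions, hence measurable. Finally, for a general nonnegative measurable $f$, choose an increasing sequence of nonnegative simple functions $f_k \uparrow f$ pointwise. Then $f_k\circ\tau(\theta,x) \uparrow f\circ\tau(\theta,x)$ pointwise, and by the Monotone Convergence Theorem applied to the measure $\q_x$ (which is finite by hypothesis \ref{q theta continuous}'s companion bounds, in particular $\q_x(\Theta) \le \supqx < \infty$), we get $H_k(x) := \int_{\Theta} f_k\circ\tau(\theta,x)\dq_x(\theta) \to H(x)$ for every $x$. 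Since each $H_k$ is measurable by the simple-function case, the pointwise limit $H$ is measurable.

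The main obstacle is the indicator case, specifically verifying that $x \mapsto \q_x(\{\theta : \tau_\theta(x) \in A\})$ is measurable for \emph{arbitrary} Borel $A$, rather than for sets of the product form $\tau^{-1}$ of something simple. The subtlety is that both the measure $\q_x$ \emph{and} the section set depend on $x$ simultaneously. I would resolve this by a Dynkin/monotone-class argument: let $\mcal{D}$ be the collection of $A \in \mscr{B}(\X)$ such that $x \mapsto \int_\Theta \charfun{A}(\tau_\theta(x))\dq_x(\theta)$ is measurable. One checks $\X \in \mcal{D}$ (this is $x\mapsto \q_x(\Theta)$, measurable by \ref{q-measurable}), that $\mcal{D}$ is closed under complements (using finiteness of $\q_x$) and under countable disjoint unions (monotone convergence again), so $\mcal{D}$ is a $\lambda$-system; and $\mcal{D}$ contains the $\pi$-system of open sets, because for open $U$ the function $\charfun{U}$ is a pointwise increasing limit of continuous functions $g_m$, whence $\int_\Theta g_m(\tau_\theta(x))\dq_x(\theta) = \Bq(g_m)(x)$ is continuous (as shown right after the definition of $\Bq$), and the limit is measurable. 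Dynkin's $\pi$-$\lambda$ theorem then gives $\mcal{D} = \mscr{B}(\X)$, completing the hardest step.
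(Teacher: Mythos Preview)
The paper states Proposition~\ref{prop-measurable-qx} inside a remark and provides no proof at all; it is simply asserted and then used. So there is no argument in the paper to compare against, and your proposal in fact supplies the missing details.

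Your bootstrap strategy is correct: the $\pi$--$\lambda$ argument for the indicator case (using that $\Bq(g_m)$ is continuous for continuous $g_m$, hence open sets lie in the good class $\mcal{D}$, and that $\mcal{D}$ is a $\lambda$-system by finiteness of $\q_x$ and monotone convergence) is exactly the right way to handle the simultaneous $x$-dependence of both the measure and the section set. The passage to simple functions and then to general nonnegative measurable $f$ via monotone convergence is routine and correct.

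One small slip worth cleaning up: the integral in the statement is over a fixed Borel set $\Theta_0\subseteq\Theta$, not over all of $\Theta$, whereas your write-up silently integrates over $\Theta$. This is harmless---either carry the factor $\charfun{\Theta_0}(\theta)$ through the argument, or run the same proof with the restricted measures $\q_x(\,\cdot\,\cap\Theta_0)$, which still satisfy \ref{q-measurable} since $A\cap\Theta_0$ is Borel. In particular, the base case ``$\X\in\mcal{D}$'' becomes $x\mapsto \q_x(\Theta_0)$, measurable by \ref{q-measurable}, and everything else goes through unchanged.
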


  Using Proposition $\ref{prop-measurable-qx}$, it is a simple induction to prove that
  \[
  x\mapsto
   \Px(\Theta_{0}\times\cdots\times\Theta_{N-1})=
                \int_{\Theta_{0}}\Pxx{\tau_{\theta_{0}}x_{0}}(\Theta_{1}\times\cdots\times\Theta_{N-1})\dq_{x_{0}}(\theta_{0})
\]
  is measurable for any $\Theta_{i}\in\mcal{B}(\Theta)$.

  In this way we conclude that $\Px$ is well defined for each space $\Theta^{N}$.
\end{remark}

\begin{theorem}\label{powers-description}
Let $\Rq=(\X,   \tau,   \q)$ be a  IFSm
and suppose that there are a positive number $\rho$ and
a strictly positive continuous function
$h: \X \to \R$ such that  $\Bq(h)=\rho h$.
Then the following limit exits
\begin{align}\label{1overNlnBN}
\lim_{N \to \infty}
\frac{1}{N}
\ln\left(\Bq^{N}(1) (x) \right)
=
\ln \rho
\end{align}
the convergence is uniform in $x$ and
$\rho=\rho(\Bq)$ is the spectral radius of $\Bq$ acting on $C(X,\R)$.
\end{theorem}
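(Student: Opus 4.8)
The plan is to sandwich $\Bq^N(1)(x)$ between multiples of $\rho^N$ using the eigenfunction $h$, take logarithms, divide by $N$, and let $N\to\infty$; the spectral radius statement will then follow from Gelfand's formula applied to the positive operator $\Bq$ on $C(\X,\R)$.

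First I would use strict positivity and continuity of $h$ on the compact space $\X$ to fix constants $0 < m \le M < \infty$ with $m \le h(x) \le M$ for all $x$. Since $\Bq$ is a positive operator (it sends nonnegative functions to nonnegative functions, being an integral against the nonnegative measures $\q_x$), monotonicity gives, for every $x$ and every $N$,
\[
\frac{m}{M}\,\rho^N \;=\; \frac{1}{M}\,\Bq^N(m\cdot 1)(x) \;\le\; \frac{1}{M}\,\Bq^N(h)(x) \;=\; \frac{\rho^N h(x)}{M} \;\le\; \Bq^N(1)(x),
\]
and symmetrically $\Bq^N(1)(x) \le \frac{1}{m}\Bq^N(h)(x) = \frac{\rho^N h(x)}{m} \le \frac{M}{m}\rho^N$, where I used $\Bq^N(h) = \rho^N h$ by iterating the eigenvalue equation. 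Taking logarithms and dividing by $N$ yields
\[
\frac{1}{N}\ln\frac{m}{M} + \ln\rho \;\le\; \frac{1}{N}\ln\bigl(\Bq^N(1)(x)\bigr) \;\le\; \frac{1}{N}\ln\frac{M}{m} + \ln\rho,
\]
and since the outer bounds are independent of $x$ and tend to $\ln\rho$, the limit \eqref{1overNlnBN} holds and the convergence is uniform in $x$.

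For the spectral radius claim, I would invoke Gelfand's formula $\rho(\Bq) = \lim_{N\to\infty}\norm{\Bq^N}^{1/N}$. The key observation (foreshadowed in the remark preceding the theorem) is that for a positive operator on $C(\X,\R)$ the operator norm $\norm{\Bq^N}$ is attained on the constant function $1$: indeed $|\Bq^N(f)(x)| \le \Bq^N(|f|)(x) \le \norm{f}_\infty\,\Bq^N(1)(x)$, so $\norm{\Bq^N} \le \norm{\Bq^N(1)}_\infty$, while the reverse inequality is immediate since $\norm{1}_\infty = 1$; hence $\norm{\Bq^N} = \norm{\Bq^N(1)}_\infty$. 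Combining this with the uniform bounds above, $\frac{m}{M}\rho^N \le \norm{\Bq^N(1)}_\infty \le \frac{M}{m}\rho^N$, so $\norm{\Bq^N}^{1/N} \to \rho$, which identifies $\rho$ with $\rho(\Bq)$.

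The only genuinely delicate point is justifying that $\Bq^N$ is positivity-preserving and that the iterated formula of Proposition~\ref{Bq-powers1} legitimately gives $\Bq^N(h)(x) = \rho^N h(x)$ — but this is just repeated application of the (already established) eigenvalue identity together with the measurability/well-definedness of $\Px$ on $\Theta^N$ discussed in the preceding remark, so there is no real obstacle; everything reduces to monotonicity of the integral and the compactness of $\X$ guaranteeing the bounds on $h$.
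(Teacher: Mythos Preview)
Your argument is correct and more direct than the paper's. (One slip in the writeup: the first equality in your displayed chain, $\frac{m}{M}\rho^N = \frac{1}{M}\Bq^N(m\cdot 1)(x)$, is false since $\Bq^N(m\cdot 1)=m\,\Bq^N(1)$; what you actually need, and what is correct, is $\Bq^N(1)\ge \frac{1}{M}\Bq^N(h)=\frac{\rho^N h}{M}\ge \frac{m}{M}\rho^N$, coming from $h\le M$ and $h\ge m$.) The paper instead builds the normalized IFSm $\Rp$ with $\dpp_x(\theta)=\frac{h(\tau_\theta(x))}{\rho h(x)}\dq_x(\theta)$, invokes Proposition~\ref{Bq-powers1} to write $\Bq^N(1)(x)=\int_{\Theta^N}\dP$, computes the telescoping identity $\dP=\rho^N\frac{h(x_0)}{h(x_N)}\dP[\p]$, and then bounds $h(x_0)/h(x_N)$ by constants and uses $\int\dP[\p]=1$ to reach the same sandwich. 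You bypass both the normalized system and the integral representation, using only positivity of $\Bq$ and the uniform bounds on $h$; this is shorter and does not need Proposition~\ref{Bq-powers1} at all. The paper's detour has the side benefit of explicitly introducing the normalization $\Rp$, a construction that recurs in Section~\ref{entropy-and-pressure}. The spectral-radius step via Gelfand's formula and $\norm{\Bq^N}=\norm{\Bq^N(1)}_\infty$ is identical in both proofs.
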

\begin{proof}
  From the hypothesis we can build a normalized IFSm $\Rp=(\X,\tau,\p)$ where
  \[ \dpp_{x}(\theta) = \frac{h(\tau_{\theta}(x))}{\rho h(x)}\dq_{x}(\theta).\]

  Note that $\dP$ and $\dP[\p]$ are related in the following way

  \begin{align*}
    \dP(\theta_{0},\ldots,\theta_{N-1})
    &=\prod_{j=1}^{N}\dq_{x_{N-j}}(\theta_{N-j})\\
    &=\prod_{j=1}^{N}\frac{\rho h(x_{N-j})}{h(\tau_{\theta_{N-j}}(x_{N-j}))}\dpp_{x_{N-j}}(\theta_{N-j})\\
    &=\rho^{N}\prod_{j=1}^{N}\frac{h(x_{N-j})}{h(x_{N-j+1})}\dpp_{x_{N-j}}(\theta_{N-j})\\
    &=\rho^{N}\frac{h(x_{0})}{h(x_{N})}\prod_{j=1}^{N}\dpp_{x_{N-j}}(\theta_{N-j})\\
    &=\rho^{N}\frac{h(x_{0})}{h(x_{N})}\dP[\p](\theta_{0},\ldots,\theta_{n-1})
  \end{align*}
where	$x_0=x$ and $x_{j+1}=\tau_{\theta_{j}}x_j$.

Since $\X$ is compact and $h$ is a strictly positive continuous function, we have
   constants $0<a<1<b$, independent of $x$, such that
  \[ a \le h(x_{0})/h(x_{N}) \le b.\]

  Using the Proposition $\ref{Bq-powers1}$ and the above inequalities, we
  obtain for any fixed $N\in\N$ the following expression

  \begin{align*}
    \frac{1}{N}\ln(\Bq^{N}(1)(x))
    &= \frac{1}{N}\ln\left(
        \int_{\Theta^{N}}
        \dP(\theta_{0},\ldots, \theta_{N-1})
      \right)\\
    &= \frac{1}{N}\ln\left(
        \int_{\Theta^{N}}
        \rho^{N}\frac{h(x_{0})}{h(x_{N})}\dP[\p](\theta_{0},\ldots, \theta_{N-1})
      \right)\\
    &= \ln\rho
    + \frac{1}{N}\ln\left(
        \int_{\Theta^{N}}
        \frac{h(x_{0})}{h(x_{N})}\dP[\p](\theta_{0},\ldots, \theta_{N-1})
      \right).
  \end{align*}

  Furthermore,
  \begin{align*}
    \frac{1}{N}\ln\left(
        \int_{\Theta^{N}}
        \frac{h(x_{0})}{h(x_{N})}\dP[\p](\theta_{0},\ldots, \theta_{N-1})
      \right)
    &\ge
    \frac{1}{N}\ln\left(
        \int_{\Theta^{N}}
        a\dP[\p](\theta_{0},\ldots, \theta_{N-1})
    \right)\\
    &= \frac{1}{N}\ln a + \frac{1}{N}\ln\int_{\Theta_{N}}\dP[\p]\\
    &= \frac{1}{N}\ln a \xrightarrow{\,N\to\infty\,} 0\\
  \end{align*}
  and

  \begin{align*}
    \frac{1}{N}\ln\left(
        \int_{\Theta^{N}}
        \frac{h(x_{0})}{h(x_{N})}\dP[\p](\theta_{0},\ldots, \theta_{N-1})
      \right)
    &\le
    \frac{1}{N}\ln\left(
        \int_{\Theta^{N}}
        b\dP[\p](\theta_{0},\ldots, \theta_{N-1})
    \right)\\
    &= \frac{1}{N}\ln b + \frac{1}{N}\ln\int_{\Theta_{N}}\dP[\p]\\
    &= \frac{1}{N}\ln b\xrightarrow{\,N\to\infty\,} 0.
  \end{align*}

  Therefore, for every $N\ge 1$ we have
\[\sup_{x\in\X}\abs{\frac{1}{N}\ln\left(\Bq^{N}(1)(x)\right) - \ln\rho} \leq \frac{\ln b-\ln a }{N},\]
  which proves ($\ref{1overNlnBN}$).
  Now using Gelfand's formula
  for the spectral radius
  and the fact that, for a positive operator $T$ defined on $C(X,\R)$
  we have $\norm{T}=\left\| T(1) \right\|_{\infty}$, where $\norm{T}$ denotes the usual norm operator
  $\norm{T}= \sup_{\{ \|f\|_{\infty} \leq 1\}} \|T(f) \|_{\infty}$,
    we have
  \begin{align*}
  \abs{\ln\rho(\Bq) - \ln\rho}
  &=
  \abs{\ln\left( \limsup_{N\to\infty} \norm{\Bq^{N}}^{\frac{1}{N}}\right) - \ln\rho}
  =
  \limsup_{N\to\infty}
  \abs{\frac{1}{N} \ln\norm{\Bq^{N}} - \ln\rho}
  \\
  &=
  \limsup_{N\to\infty}
  \abs{ \frac{1}{N} \ln\left( \left\|\Bq^{N}(1)\right\|_{\infty} \right) - \ln\rho }
  \\
  &\leq
  \limsup_{N\to\infty}
  \ \ \sup_{x\in\X}
  \abs{\frac{1}{N} \ln\left( \Bq^{N}(1)(x) \right) - \ln\rho}
  \\
  &\leq
  \limsup_{N\to\infty}
  \frac{\ln b -\ln a }{N}
  =0.
\end{align*}

\end{proof}

We will now address the question of existence of positive eigenfunctions  for the transfer operator associated to the spectral radius, and give a constructive proof of the existence of equilibrium states.

Let $\Rq=(\X, \tau, \q)$ assuming that there is $\mu$ a probability on $\Theta$ s.t. $\forall x \in \X$, $\q_{x} \ll \mu$ and $J:\Theta\times\X\to\R$, defined by $J(\theta,x) := \frac{\dq_{x}}{\dmu}(\theta)$, a continuous function. Define $u(x,\theta) = \log J(\theta,x)$, and consider a parametric family of variable discount functions $\delta_{n}:\intervalco{0,+\infty}\to\R$, where $\delta_{n}(t) \to I(t) = t$, when $n\to\infty$, pointwise and the normalized limits $\lim_{n}w_{n}(x) - \max w_{n}$ of the fixed points
\[
w_{n}(x) := \log\int_{\Theta}e^{u(\theta,x)+\delta_{n}(w_{n}(\tau(\theta,x)))}\dmu = \log\int_{\Theta}e^{\delta_{n}(w_{n}(\tau(\theta,x)))}\dq_{x}(\theta)
\]
for a discounted transfer operator (see\cite{cioletti2019}, Definition 3.2 for the operator definition and Theorem 3.6 for the existence of the fixed points) associated to a variable discount decision-making process. We now check that the requirements in $\cite{cioletti2019}$ are fulfilled in our setting. Consider $S_{n}:=(\X,\Theta,\Psi,\tau,u,\delta_{n})$ where $\Psi(x) = \Theta$ for all $x\in\X$ and the sequence $(\delta_{n})$ satisfies the admissibility conditions:

\begin{enumerate}
  \item the contraction modulus $\gamma_{n}$ of $\delta_{n}$ is also a variable discount function;
  \item $\delta_n(0) = 0$ and $\delta_{n}(t) \le t$ for any $t\in(0,+\infty)$;
  \item for any fixed $\alpha > 0$ we have $\delta_{n}(t+\alpha) - \delta_{n}(t) \to \alpha$ when $n\to\infty$, uniformly in $t > 0$.
\end{enumerate}

\begin{theorem}
  Let $\Rq$ and $(\delta_{n})$ in above conditions such that the above defined $u$ satisfies:
  \begin{enumerate}
    \item $u$ is uniformly $\delta$-bounded for $(\delta_{n})$;
    \item $u$ is uniformly $\delta$-dominated for $(\delta_{n})$.
  \end{enumerate}
  Then there exists a positive and continuous eigenfunction $h$ such that $\Bq(h) = \rho(\Bq)h$.
\end{theorem}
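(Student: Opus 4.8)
The plan is to realise $h$ as a vanishing-discount limit of the fixed points $w_{n}$, to check that this limit solves the eigenvalue equation for $\Bq$, and then to identify the eigenvalue with the spectral radius through Theorem~\ref{powers-description}.

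First, the discussion preceding the statement already verifies that each $S_{n}=(\X,\Theta,\Psi,\tau,u,\delta_{n})$ is admissible in the sense of \cite{cioletti2019}, and the two standing hypotheses are precisely the uniform $\delta$-boundedness and uniform $\delta$-domination of $u$ required there. Hence, beyond the existence of the fixed points $w_{n}$ (Theorem~3.6 of \cite{cioletti2019}), the associated a priori estimates apply: the normalized family $\tilde w_{n}:=w_{n}-\max_{\X}w_{n}$ is equibounded (the upper bound $\tilde w_{n}\le 0$ is trivial, the lower bound being the content of uniform $\delta$-boundedness) and equicontinuous, the latter using the uniform continuity of $u$ and of $(\theta,x)\mapsto\tau_{\theta}(x)$ on the compact product $\Theta\times\X$ together with the contraction-modulus admissibility of the $\delta_{n}$. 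By Arzel\`a--Ascoli some subsequence $\tilde w_{n_{k}}$ converges uniformly to a continuous $V$ with $\max_{\X}V=0$; in particular $h:=e^{V}$ is continuous and strictly positive on $\X$.

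Next I would pass to the limit in the fixed-point identity $e^{w_{n_{k}}(x)}=\int_{\Theta}e^{\delta_{n_{k}}(w_{n_{k}}(\tau_{\theta}(x)))}\dq_{x}(\theta)$. Dividing through by $e^{\max w_{n_{k}}}$, the left-hand side is $e^{\tilde w_{n_{k}}(x)}\to h(x)$, while in the integrand on the right the exponent splits as $\delta_{n_{k}}(w_{n_{k}}(\tau_{\theta}(x)))-\max w_{n_{k}}=\big[\delta_{n_{k}}(w_{n_{k}}(\tau_{\theta}(x)))-\delta_{n_{k}}(\max w_{n_{k}})\big]-r_{n_{k}}$, with $r_{n_{k}}:=\max w_{n_{k}}-\delta_{n_{k}}(\max w_{n_{k}})\ge 0$. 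The bracketed term converges uniformly in $(\theta,x)$ to $V(\tau_{\theta}(x))$ by admissibility condition $(3)$ applied to the increments $\tilde w_{n_{k}}(\tau_{\theta}(x))$ (which for large $k$ lie in a fixed compact interval, since $\tilde w_{n_{k}}\to V$ uniformly), and since $\tau$ is continuous and $x\mapsto\q_{x}$ is weak$^{*}$-continuous with uniformly bounded mass, the right-hand side equals $e^{-r_{n_{k}}}$ times a quantity converging uniformly to $\int_{\Theta}e^{V(\tau_{\theta}(x))}\dq_{x}(\theta)=\Bq(h)(x)$. As the left-hand side tends to the strictly positive function $h$ and $e^{-r_{n_{k}}}$ does not depend on $x$, the limit $\rho:=\lim_{k}e^{r_{n_{k}}}$ must exist in $(0,\infty)$, and one obtains $\Bq(h)=\rho\,h$.

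Finally, $h$ is a strictly positive continuous eigenfunction of $\Bq$, so Theorem~\ref{powers-description} yields $\rho=\rho(\Bq)$, i.e.\ $\Bq(h)=\rho(\Bq)\,h$. The step that needs genuine care is the uniform convergence $\delta_{n_{k}}(w_{n_{k}}(\cdot))-\delta_{n_{k}}(\max w_{n_{k}})\to V$: condition $(3)$ provides, for each \emph{fixed} increment $\alpha$, convergence uniform in the base variable, so one must upgrade it to convergence uniform over $\alpha$ in a compact set (and over base points in the relevant half-line), which is exactly where the equi-Lipschitz control of the $\delta_{n}$ coming from the contraction-modulus condition, combined with the uniform convergence $\tilde w_{n_{k}}\to V$, is needed; this is the part imported from \cite{cioletti2019}. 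Everything else --- Arzel\`a--Ascoli, the weak$^{*}$-continuity and uniform boundedness of $x\mapsto\q_{x}$, and the appeal to Theorem~\ref{powers-description} --- is routine.
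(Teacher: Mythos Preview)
Your approach is essentially the same as the paper's: the paper simply invokes Theorem~3.28 of \cite{cioletti2019} as a black box to obtain a positive continuous eigenfunction $\varphi$ with $\Bq(\varphi)=e^{k}\varphi$, and then appeals to Theorem~\ref{powers-description} to identify $e^{k}=\rho(\Bq)$. What you have done is unpack the vanishing-discount argument that lies behind that citation (Arzel\`a--Ascoli on the normalized fixed points, passage to the limit in the fixed-point identity, and the technical care with admissibility condition~(3)), and then close with the same appeal to Theorem~\ref{powers-description}; so the two proofs coincide in strategy, yours being the expanded version.
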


\begin{proof}
  Theorem 3.28 of $\cite{cioletti2019}$ implies that there exists $k\in[0,\norm{u}_{\infty}]$ and $\varphi(x):=e^{h(x)}$ continuous and positive function with

 \[e^{k}\varphi(x) = \int_{\Theta}\varphi\circ\tau(\theta,x)\,e^{u(x,\theta)}\dmu(\theta) = \Bq(\varphi)(x),\]

 for all $x\in\X$. Now use the Theorem \ref{powers-description} and the theorem is proven.
\end{proof}

\section{Markov Operator and its Eigenmeasures}\label{Markov_Operator}

In this section we define the Markov Operator, which in the case of a normalized IFSm gives the evolution of the distribution of the associated Markov Process, and show that the set of eigenmeasures for it is non-empty.

\smallskip

\begin{defi}
	The \textbf{Markov Operator}
	$\Lq : \toitself{\mscr{M}_{s}(X)}$
	is the unique bounded linear operator satisfying
	\[
	\int_{\X} f\, \text{d}[\mcal{L}_{  q} (\mu)] =
	\int_{\X } \Bq(f)\, \text{d}\mu,
	\]
	for all $\mu \in \mscr{M}_s(X)$ and $f\in C(\X,\R)$.
\end{defi}

In the case of a normalized IFSm, we can consider the Markov Process $\{Z_j, j\geq 0\}$ with initial distribution $Z_0\sim \mu_0$, where $\mu_0 \in \mscr{M}_{1}(X)$, and
$Z_{j+1}=\tau_{{\theta}_j}(Z_j)$ for $j \geq 0$, where ${{\theta}_j} \sim q_{Z_j}$. Then, if $Z_j \sim \mu_j$, we have $\mu_{j+1}= \Lq(\mu_j)$.

\begin{theorem}\label{prop-exist-auto-medida}
Let $\Rq=(\X,   \tau,  \q)$ be a IFSm.
Then there exists a positive number $\rho\leq\rho(\Bq)$ such that the set
$
\mcal{G}^{*}(\q)
=
\{
  \nu \in \mathscr{M}_1(X): \mathcal{L}_{  q}\nu =\rho\nu
\}
$
is not empty.
\end{theorem}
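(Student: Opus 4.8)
The plan is to apply a fixed-point argument on the compact convex set $\mscr{M}_1(X)$. First I would observe that $\Lq$ is a positive operator (it maps nonnegative measures to nonnegative measures, since $\Bq$ maps nonnegative functions to nonnegative functions), and that it is weak-$*$ continuous on $\mscr{M}_s(X)$: this follows from the duality relation defining $\Lq$ together with the fact, established right after the definition of $\Bq$, that $\Bq$ maps $C(\X,\R)$ into itself continuously; thus for $f\in C(\X,\R)$ the map $\mu\mapsto\int f\,\mathrm{d}[\Lq(\mu)]=\int\Bq(f)\,\mathrm{d}\mu$ is weak-$*$ continuous. Next, $\Lq$ does not in general preserve $\mscr{M}_1(X)$ because $\Bq(1)$ need not equal $1$; the standard fix is to renormalize. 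Define the map
\[
    \Phi:\mscr{M}_1(X)\longrightarrow\mscr{M}_1(X),\qquad
    \Phi(\nu)=\frac{\Lq(\nu)}{\Lq(\nu)(X)}=\frac{\Lq(\nu)}{\int_X\Bq(1)\,\mathrm{d}\nu}.
\]
The denominator is well defined and bounded away from $0$: by hypotheses $(\q1)$ and $(\q2)$ we have $0<\infqx\le\Bq(1)(x)\le\supqx<\infty$ for all $x$, hence $0<\infqx\le\Lq(\nu)(X)\le\supqx$ for every $\nu\in\mscr{M}_1(X)$. Therefore $\Phi$ is well defined, and it is weak-$*$ continuous since it is the composition of the weak-$*$ continuous map $\Lq$ with division by the weak-$*$ continuous, strictly positive scalar $\nu\mapsto\int_X\Bq(1)\,\mathrm{d}\nu$.

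Then I would invoke the Schauder--Tychonoff fixed point theorem: $\mscr{M}_1(X)$ is a nonempty, convex, compact subset of the locally convex space $\mscr{M}_s(X)$ with the weak-$*$ topology (compactness and metrizability are noted in the text since $X$ is a compact metric space), and $\Phi$ is continuous, so $\Phi$ has a fixed point $\nu\in\mscr{M}_1(X)$. Setting $\rho:=\Lq(\nu)(X)=\int_X\Bq(1)\,\mathrm{d}\nu$, the fixed-point equation $\Phi(\nu)=\nu$ reads $\Lq(\nu)=\rho\nu$, with $\rho\in[\infqx,\supqx]$, in particular $\rho>0$. This shows $\mcal{G}^*(\q)\neq\varnothing$.

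Finally I would check the bound $\rho\le\rho(\Bq)$. Applying the eigen-relation $\Lq(\nu)=\rho\nu$ iteratively and using the duality $\int f\,\mathrm{d}[\Lq^N(\nu)]=\int\Bq^N(f)\,\mathrm{d}\nu$, one gets $\rho^N=\rho^N\nu(X)=\int_X\Bq^N(1)\,\mathrm{d}\nu\le\norm{\Bq^N(1)}_\infty=\norm{\Bq^N}$ (the last equality because $\Bq$ is a positive operator on $C(X,\R)$, as used in Theorem~\ref{powers-description}). Taking $N$-th roots and letting $N\to\infty$, Gelfand's formula gives $\rho\le\limsup_N\norm{\Bq^N}^{1/N}=\rho(\Bq)$.

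The only genuinely delicate point is the continuity of $\Phi$ in the weak-$*$ topology; once that is in hand, Schauder--Tychonoff does the rest. I would spell out that $\mu_n\rightharpoonup\mu$ implies $\int\Bq(f)\,\mathrm{d}\mu_n\to\int\Bq(f)\,\mathrm{d}\mu$ for every $f\in C(\X,\R)$ (using $\Bq(f)\in C(\X,\R)$), hence $\Lq(\mu_n)\rightharpoonup\Lq(\mu)$, and that the scalar normalizing factors converge to a nonzero limit; combining these gives $\Phi(\mu_n)\rightharpoonup\Phi(\mu)$. (Alternatively, since $\mscr{M}_1(X)$ is metrizable, sequential continuity suffices, which is what I would use.)
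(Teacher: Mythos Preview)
Your proposal is correct and follows essentially the same approach as the paper: normalize $\Lq$ to obtain a weak-$*$ continuous self-map of the compact convex set $\mscr{M}_1(X)$, apply Schauder--Tychonoff to get a fixed point $\nu$ with $\Lq\nu=\rho\nu$, and then bound $\rho\le\rho(\Bq)$ via $\rho^N=\int\Bq^N(1)\,\mathrm{d}\nu\le\|\Bq^N\|$ and Gelfand's formula. The only difference is that the paper, after obtaining one fixed point, goes on to define $\rho$ as the \emph{supremum} of $\Lq(\nu)(X)$ over all such fixed points and uses a further compactness argument to show this supremum is attained; this yields a canonical (maximal) eigenvalue, but is not needed for the statement as written, so your direct argument suffices.
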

\begin{proof}
Notice that the mapping
\[
\mscr{M}_1(X)\ni
\gamma
\mapsto
\frac{\mathcal{L}_{  q}(\gamma) } {\mathcal{L}_{  q}(\gamma)(X)}
\]
sends $\mathscr{M}_1(X)$ to itself. From its convexity
and compactness, in the weak topology which is Hausdorff when $X$ is metric and compact,
it follows from the continuity of $\mathcal{L}_{  q}$ and the Tychonov-Schauder Theorem
that there is at least one probability measure $\nu$ satisfying
$\mathcal{L}_{  q}(\nu)=(\mathcal{L}_{  q}(\nu)(X))\, \nu$.

We claim that
\begin{align}\label{des1-exi-auto-medida}
\inf_{x\in\X}\q_{x}(\Theta)
\leq
\mathcal{L}_{  q}(\gamma)(\X)
\leq
\sup_{x\in\X} \q_{x}(\Theta)
\end{align}
for every $\gamma\in \mathscr{M}_1(X)$.

Indeed,
  \[\Bq(1)(x) = \int_{\Theta}1\dq_{x}(\theta) = \q_{x}(\Theta),\]
  \[\Lq(\gamma)(\X) = \int_{\X}1\,\text{d}[\Lq\gamma]=\int_{\X}\Bq(1)\dgamma = \int_{\X}\q_{x}(\Theta)\dgamma(x),\]
  \[0 < \inf_{x\in\X}\q_{x}(\Theta) \le \int_{\X}\q_{x}(\Theta)\dgamma(x) \le \sup_{x\in\X}\q_{x}(\Theta) < \infty.\]

From the inequality $\eqref{des1-exi-auto-medida}$ it follows that
\[
0
<
\rho
\equiv
\sup\{ \Lq(\nu)(X):
    \Lq(\nu)
		=
		(\Lq(\nu)(X))\, \nu
	\}
<
+\infty.
\]
By a compactness argument one can show the existence of
$\nu\in \mscr{M}_{1}(\X)$ so that
$\Lq\nu=\rho\nu$.
Indeed, let ${(\nu_n)}_{n\in\mathbb{N}}$
be a sequence such that
$\Lq(\nu_n)(\X)\uparrow \rho$,
when $n$ goes to infinity.
Since $\mscr{M}_1(X)$ is compact metric space
in the weak topology we can assume, up to subsequence,
that $\nu_n\rightharpoonup \nu$. This convergence
together with the continuity of $\Lq$ provides
\[
\Lq\nu
=
\lim_{n\to\infty}\Lq\nu_n
=
\lim_{n\to\infty}\Lq(\nu_n)(X)\nu_n
=
\rho\, \nu,
\]
thus showing that the set
$
\mcal{G}^{*}( {q})
\equiv\{
    \nu \in \mscr{M}_1(X):
      \Lq\nu =\rho\, \nu
\}
\neq
\emptyset
$.

To finish the proof we observe that by using  any
$\nu\in \mcal{G}^{*}(\q)$, we get the following inequality
\begin{align*}
\rho^N
=
\int_{X} \Bq^{N}(1)\, \text{d}\nu
\leq
\norm{\Bq^{N}}.
\end{align*}
From this inequality and Gelfand's Formula, it
follows that $\rho\leq \rho(B_{ {q}})$.
\end{proof}

\section{Holonomic Measure and Disintegrations}\label{sec-holonomic}

Now  we introduce holonomic measures, which play the role of invariant measures in the IFS setting.

An invariant measure for a classical dynamical system $T: \toitself{\X}$ on a compact space
is a measure $\mu$ satisfying  for all $f\in C(X,\R)$
\[
\int_{\X} f(T(x))\,\text{d}\mu=
\int_{\X} f(x)\,\text{d}\mu,
\quad \text{equivalently}\quad
\int_{\X} f(T(x))-f(x)\,\text{d}\mu= 0.
\]
From the Ergodic Theory point of view the natural generalization
of this concept for an IFS $\mcal{R}=(\X, \tau)$ is the
concept of holonomy.

Consider the cartesian product space $\Omega\equiv\X\times\Theta$
and for each $f\in C(X,\R)$ its ``$\Theta$-differential''
$\df: \Omega\to\R$ defined by $[\df[x]](\theta)\equiv f(\tau_{\theta} (x)) -f(x)$.

\begin{defi}\label{invariance}
A measure $\hat{\mu}$ over $\Omega$ is said holonomic,
with respect to an IFS $\mcal{R}$ if
for all $f\in C(X,\R)$ we have
\begin{align*}
\int_{\Omega}[\df[x]](\theta) \, d\hat{\mu}(x,\theta)=0.
\end{align*}
Notation,
\[
\ds \HR\equiv
    \{\hat{\mu}  \, | \, \hat{\mu}
    \text{ is a holonomic probability measure with respect to}\ \mcal{R}\}.
\]
\end{defi}

Since $\Omega$ is compact the set of all holonomic probability measures is obviously convex and compact. It is also not empty because $\Omega$
is compact and any average
\[
\hat{\mu}_{N}
    \equiv \frac{1}{N} \sum_{j=0}^{N-1} \delta_{(x_j, {\theta}_j)},
\]
where $x_{j+1} = \tau_{{\theta}_j}(x_j)$ and $x_0\in \X$ is fixed,
will have their cluster
points in  $\HR$. Indeed, for all $N\geq 1$ we have
the following identity
\begin{align*}
\int_{\Omega} [\df[x]](\theta) \, d\hat{\mu}_{N}(x,\theta)
&= \frac{1}{N} \sum_{j=0}^{N-1} [\df[x_{j}]](\theta_j)
= \frac{1}{N} (f(\tau_{\theta_{N-1}}(x_{N-1}))-f(x_0) ).
\end{align*}

From the above expression is easy to see that
if $\hat{\mu}$ is a cluster point of the sequence ${(\hat{\mu}_N)}_{N\geq 1}$,
then there is a subsequence ${(N_k)}_{k\to\infty}$ such that
\begin{align*}
\int_{\Omega} [\df[x]](\theta) \, d\hat{\mu}(x,\theta)
&= \lim_{k\to\infty}
   \int_{\Omega} [\df[x]](\theta) \, d\hat{\mu}_{N_k}(x,\theta)\\
&= \lim_{k\to\infty}\frac{1}{N_{k}} (f(x_{N_k})-f(x_0) )
=0.
\end{align*}

\begin{theorem}[Disintegration]\label{disintegration thm}
Let $X$ and $Y$ be compact metric spaces,
$\hat{\mu}:\mscr{B}(Y)\to [0,1]$ a Borel probability measure, $T:Y \to X$ a
Borel mensurable function and for each $A\in\mscr{B}(X)$ define a
probability measure $\mu(A)\equiv \hat{\mu}(T^{-1}(A))$. Then there exists a
family of Borel probability measures ${(\mu_{x})}_{x \in X}$ on $Y$,
uniquely determined $\mu$-a.e, such that
\begin{enumerate}
\item $\mu_{x}(Y\backslash T^{-1}(x)) = 0 $, $\mu$-a.e;
\item $\ds
      \int_{Y} f\, d\hat{\mu}
      = \int_{X}\left[\int_{T^{-1}(x)} \!\!\! f(y)\  d\mu_{x}(y)\right] d\mu(x)$.
\end{enumerate}
This decomposition is called the \textit{disintegration}
of $\hat{\mu}$, with respect to $T$.
\end{theorem}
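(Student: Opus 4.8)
The plan is to build the family $(\mu_x)_{x\in X}$ by combining the Radon--Nikodym theorem with the Riesz representation theorem, exploiting the separability of $C(Y,\R)$ to collapse uncountably many exceptional null sets into a single one. First, for each $f\in C(Y,\R)$ define a finite signed measure $\lambda_f$ on $\mscr{B}(X)$ by $\lambda_f(A)=\int_{T^{-1}(A)}f\,d\hat{\mu}$. Since $|\lambda_f(A)|\le\|f\|_{\infty}\,\hat{\mu}(T^{-1}(A))=\|f\|_{\infty}\,\mu(A)$, we have $\lambda_f\ll\mu$, so there is $g_f\in L^1(\mu)$ with $d\lambda_f=g_f\,d\mu$, $|g_f|\le\|f\|_{\infty}$ $\mu$-a.e., $g_1=1$ $\mu$-a.e., $f\ge 0\Rightarrow g_f\ge 0$ $\mu$-a.e., and $f\mapsto g_f$ linear modulo $\mu$-null sets by uniqueness of Radon--Nikodym derivatives.

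Next, since $Y$ is compact metric, $C(Y,\R)$ is separable; fix a countable $\mathbb{Q}$-linear subspace $\mathcal D\subset C(Y,\R)$ that is dense and contains the constant $1$. There is a single $\mu$-null set $N\subset X$ off which, simultaneously for all $f,f'\in\mathcal D$ and $a,b\in\mathbb{Q}$, one has $g_{af+bf'}(x)=a\,g_f(x)+b\,g_{f'}(x)$, $g_1(x)=1$, $g_f(x)\ge 0$ whenever $f\ge 0$, and $|g_f(x)|\le\|f\|_{\infty}$. For $x\notin N$ the map $f\mapsto g_f(x)$ is then a positive $\mathbb{Q}$-linear functional on $\mathcal D$ bounded by the sup norm, hence extends uniquely to a positive bounded linear functional of norm one on $C(Y,\R)$; by Riesz representation this functional is $f\mapsto\int_Y f\,d\mu_x$ for a unique Borel probability $\mu_x$ on $Y$, and for $x\in N$ we set $\mu_x=\delta_{y_0}$ for a fixed $y_0$. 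Measurability of $x\mapsto\mu_x(B)$ for $B\in\mscr{B}(Y)$ follows by noting $x\mapsto\int_Y f\,d\mu_x=g_f(x)$ is measurable for $f\in\mathcal D$, hence for all $f\in C(Y,\R)$ by uniform approximation, and then by a monotone-class argument in $B$.

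For property 2, for $f\in\mathcal D$ we get $\int_X\big(\int_Y f\,d\mu_x\big)\,d\mu(x)=\int_X g_f\,d\mu=\lambda_f(X)=\int_Y f\,d\hat{\mu}$; uniform approximation extends this to every $f\in C(Y,\R)$, and a monotone-class/dominated-convergence argument extends it to bounded Borel integrands, in particular to $(\phi\circ T)\cdot f$ with $\phi$ Borel on $X$. The \textbf{main obstacle} is property 1. Using the pushforward identity $\int_Y(g\circ T)\,d\hat{\mu}=\int_X g\,d\mu$ for bounded Borel $g$ together with the extended property 2 applied to $(\phi\circ T)(\psi\circ T)$, one obtains $\int_X\phi(x)\big(\int_Y(\psi\circ T)\,d\mu_x\big)\,d\mu(x)=\int_X\phi\psi\,d\mu$ for all $\phi\in C(X,\R)$, hence $\int_Y(\psi\circ T)\,d\mu_x=\psi(x)$ for $\mu$-a.e.\ $x$, for each fixed $\psi\in C(X,\R)$. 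Taking $\psi$ in a countable dense subset of $C(X,\R)$ yields one $\mu$-null set outside which this holds for all continuous $\psi$; fixing such an $x$ and applying the identity to $\psi_k(z)=\min\{1,\,k\,d_X(z,x)\}$, monotone convergence gives $\mu_x(\{y:T(y)\ne x\})=\lim_k\int_Y(\psi_k\circ T)\,d\mu_x=\lim_k\psi_k(x)=0$, which is property 1.

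Finally, for uniqueness, if $(\mu_x)$ and $(\mu'_x)$ both satisfy 1 and 2, then property 2 with integrand $(\phi\circ T)f$ gives $\int_X\phi(x)\big(\int_Y f\,d\mu_x-\int_Y f\,d\mu'_x\big)\,d\mu(x)=0$ for all $\phi\in C(X,\R)$, so the inner difference vanishes $\mu$-a.e.; letting $f$ range over a countable dense subset of $C(Y,\R)$ forces $\mu_x=\mu'_x$ for $\mu$-a.e.\ $x$. The delicate points throughout are purely measure-theoretic: turning each ``$\mu$-a.e.'' statement into one valid off a common null set (handled by countable dense families) and justifying the passage from continuous to bounded Borel integrands in property 2 (handled by monotone class), which is exactly what property 1 consumes.
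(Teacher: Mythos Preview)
Your argument is correct and is essentially the classical Radon--Nikodym/Riesz/separability proof of disintegration. The paper, however, does not prove this theorem at all: its entire proof reads ``For a proof of this theorem, see \cite{dellacherie1978probabilities} p.~78 or \cite{ambrosio2005gradient}, Theorem 5.3.1.'' So there is nothing to compare strategically; you have supplied a self-contained proof where the authors simply defer to the literature, and the approach you take is in fact the standard one found in those very references.
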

\begin{proof}
For a proof of this theorem, see $\cite{dellacherie1978probabilities}$ p.78 or $\cite{ambrosio2005gradient}$, Theorem 5.3.1.
\end{proof}

In this paper we are interested in disintegrations in cases where
$Y$ is the Cartesian product $\Omega\equiv \X\times\Theta$ and $T:\Omega\to\X$
is the projection on the first coordinate.
In such cases if $\hat{\mu}$ is any Borel probability measure on $\Omega$,
then follows from the first conclusion
of Theorem $\ref{disintegration thm}$ that the disintegration
of $\hat{\mu}$ provides for each $x\in \X$ a unique probability measure
$\mu_{x}$ ($\mu$-a.e.) supported on $\{x\}\times\Theta$.
So we can write the disintegration of $\hat{\mu}$
as $\text{d}\hat{\mu}(x,\theta)= \text{d}\mu_{x}(\theta)\text{d}\mu(x)$, where here we are abusing notation
identifying $\mu_x(\{x\}\times A)$ with $\mu_x(A)$.

Consider any IFSm $\Rq=(\X,\tau,\q)$ and a probability $\nu \in \mathscr{M}_{1}(\X)$ such that the Markov operator associated to the IFSm $\Rq$ satisfies
\[
  \Lq(\nu) = \nu,
\]
then it is possible to define a
holonomic probability measure $\hat{\mu}\in \HR$ given by
$\text{d}\hat{\mu}(x,\theta)= \text{d}q_{x}(\theta) \,\text{d}\nu(x)$.
Indeed, fixed $f \in C(X,\mathbb{R})$, we obtain
\[
  \int_{\Omega}[\df[x]](\theta) \,\text{d}\hat{\mu}(x,\theta) = 0
  \Longleftrightarrow
  \int_{\Omega}f(\tau_{\theta}(x))  \,\text{d}\hat{\mu}(x,\theta)  = \int_{\Omega} f(x) \,\text{d}\hat{\mu}(x,\theta).
\]
But,
\[
  \int_{\X}\int_{\Theta} f(\tau_{\theta}(x))  \,\text{d}q_{x}(\theta)\text{d}\nu(x)
  = \int_{\X}\int_{\Theta} f(x) \,\text{d}q_{x}(\theta)\text{d}\nu(x).
\]
if, and only if,
\[
  \int_{\X}\Bq(f)\,\text{d}\nu = \int_{\X} f \,d\nu, \; \forall f \in C(X,\mathbb{R}).
\]
Which is equivalent to $\Lq(\nu) = \nu$. Since any disintegration $\hat{\mu}$ has a marginal $\mu_{x}=q_x$, $\nu-a.e.$, this Borel probability measure $\hat{\mu}$ on $\Omega$, will be called
a {\bf holonomic lifting} of $\nu$, with respect to $\Rq$.

\section{Entropy and Pressure for IFSm}\label{entropy-and-pressure}

We now define two concepts of entropy, compare then, show sufficient conditions for them to be equal and introduce the topological pressure of a given potential, as well as the concept of equilibrium states. We show in this section a first result  on the existence of equilibrium states. In all that follows, the a priori measure has a special role (see \cite{lopes2015entropy}).

As in the previous section the mapping $T:\Omega\to X$ denotes the projection
on the first coordinate. Even when not explicitly mentioned,
any disintegrations of a probability measure $\hat{\nu}$, defined over $\Omega$,
will be from now on considered with respect to $T$.

\begin{defi}[Variational Entropy]\label{entropy}
Let $\mcal{R}$ be an IFS, $\hat{\nu} \in \HR$, $\mu$ a probability on $\Theta$ and
$d\hat{\nu}(x,\theta)=d\nu_{x}(\theta)d\nu(x)$ a disintegration of $\hat{\nu}$, with
respect to $T$.
The variational entropy of $\hat{\nu}$ with a priori probability $\mu$ is defined by
\[
  \hv(\hat{\nu})
   \equiv \inf_{ \substack{g\,\in\,C(\X, \R) \\ g>0 }}
        \left\{ \int_{\X} \ln\frac{\Bu(g)}{ g }  \,\text{d}\nu \right\}.
\]
\end{defi}

\begin{defi}
  When $\q={(\q_{x})}_{x\in\X}$ is a family of measures on $\Theta$ and $\mu$ a probability on $\Theta$,
  and $\nu$ is a probability on $\X$, we say that the family $\q$ is $\nu$-almost everywhere absolutely continuous with respect to $\mu$ when $\q_{x}\ll\mu$ for $\nu$-almost everywhere $x$ on $\X$ and write $\q\ll_{\nu}\mu$.

  If $\q\ll_{\nu}\mu$, we define $J_{x}(\theta)$ such that $J_{x} = \frac{\dq_{x}}{\dmu}$ when $\q_{x}\ll\mu$ and $J_{x}(\theta) = 0$ otherwise.
\end{defi}

\begin{defi}[Average entropy]
Let $\mcal{R}$ be an IFS, $\hat{\nu} \in \HR$,
$d\hat{\nu}(x,\theta)=d\nu_{x}(\theta)d\nu(x)$ a disintegration of $\hat{\nu}$ with
respect to $T$ and $\mu$ a probability on $\Theta$ such that $(\nu_{x})\ll_{\nu}\mu$.
The average entropy of $\hat{\nu}$ with respect to $\mu$ is defined by

\[
\ha(\hat{\nu})
  \equiv
  -\int_{\Omega} \ln J_{x}(\theta)\dnu_{x}(\theta)\dnu(x).
\]
\end{defi}

\begin{defi}[Optimal Function]\label{opt_function}
  Let $\mcal{R}$ be an IFS, $\hat{\nu}\in\HR$, $\text{d}\hat{\nu}(x,\theta)=\text{d}\nu_{x}(\theta)\,\text{d}\nu(x)$ a disintegration of $\hat{\nu}$ with respect to $T$ and $\mu$ a probability on $\Theta$ such that $(\nu_{x})\ll_{\nu}\mu$. We say that a positive function $g\in C(\X,\R)$ is optimal, with respect to the system of measures associated to
  the desintegration $\text{d}\hat{\nu}(x,\theta)=\text{d}\nu_{x}(\theta)\,\text{d}\nu(x)$
   if we have
  \[
    J_{x}(\theta) = \frac{g(\tau_{\theta}(x))}{\Bu(g)(x)}.
  \]
\end{defi}

\begin{prop}\label{entropy inequality ln}
  Let $\dq_{x}(\theta) = Q_{x}(\theta)\dmu(\theta)$ and $dp_{x}(\theta) = P_{x}(\theta)\dmu(\theta)$ be probabilities, and suppose $P$ is continuous, positive and bounded away from zero, while $Q$ is a positive  function which is integrable with respect to $\dq_{x}(\theta)\dnu(x)$. Then we have

  \[-\int_{\X}\int_{\Theta}\log(Q_{x}(\theta))\dq_{x}(\theta)\dnu(x) \le -\int_{\X}\int_{\Theta}\log(P_{x}(\theta)) \dq_{x}(\theta)\dnu(x).\]
\end{prop}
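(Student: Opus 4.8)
The plan is to recognize this as a relative-entropy (Jensen/Gibbs) inequality applied fiberwise in $x$ and then integrated in $\nu$. First I would fix $x\in\X$ and work with the two probability densities $Q_x(\cdot)$ and $P_x(\cdot)$ on $\Theta$ with respect to the common reference measure $\mu$. The quantity
\[
\int_{\Theta}\log\frac{Q_x(\theta)}{P_x(\theta)}\dq_x(\theta)
=\int_{\Theta}\log\frac{Q_x(\theta)}{P_x(\theta)}\,Q_x(\theta)\dmu(\theta)
\]
is exactly the Kullback--Leibler divergence $D(\q_x\,\|\,p_x)$, which is nonnegative. The cleanest way to see this is the elementary inequality $\log t \le t-1$ (equivalently $-\log t \ge 1-t$): writing $t = P_x(\theta)/Q_x(\theta)$ on the set where $Q_x(\theta)>0$, one gets
\[
-\int_{\Theta}\log\frac{P_x(\theta)}{Q_x(\theta)}\,Q_x(\theta)\dmu(\theta)
\ge \int_{\Theta}\Bigl(1-\frac{P_x(\theta)}{Q_x(\theta)}\Bigr)Q_x(\theta)\dmu(\theta)
= \int_{\Theta}Q_x(\theta)\dmu - \int_{\{Q_x>0\}}P_x(\theta)\dmu \ge 1-1 = 0,
\]
using that both $Q_x\dmu$ and $P_x\dmu$ are probability measures. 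Rearranging gives the pointwise-in-$x$ inequality
\[
-\int_{\Theta}\log Q_x(\theta)\dq_x(\theta) \le -\int_{\Theta}\log P_x(\theta)\dq_x(\theta).
\]

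Then I would integrate this inequality in $x$ against $\dnu(x)$, which preserves the inequality and yields the claim. The hypotheses are there precisely to make every integral well-defined: $P$ continuous, positive and bounded away from zero ensures $\log P_x(\theta)$ is bounded, so the right-hand side is finite and the term $\int P_x\dmu$ in the estimate above is genuinely $1$; the integrability of $Q$ with respect to $\dq_x(\theta)\dnu(x)$ is what makes the left-hand side $-\int\int\log Q_x\,Q_x\dmu\dnu$ well-defined (note $-\log Q_x\cdot Q_x$ is bounded below, so the integral is at worst $+\infty$ a priori, but the inequality still makes sense and the hypothesis rules out $-\infty$ ambiguities on the negative part). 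I would also remark that the density $Q_x$ is only defined $\mu$-a.e., but since we integrate against $\q_x = Q_x\mu$ this causes no issue; on the $\mu$-null set $\{Q_x=0\}$ the factor $\dq_x$ vanishes, so the convention $0\cdot\log 0 = 0$ applies.

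The main obstacle is really only bookkeeping about integrability and null sets rather than any substantive difficulty: one must be careful that $\log(Q_x(\theta)/P_x(\theta))$ need not be bounded above (since $Q_x$ can be large), so Jensen's inequality in the form $\varphi(\int\cdot)\le\int\varphi(\cdot)$ for the convex function $\varphi(t)=t\log t$ should be invoked in a way that tolerates $+\infty$, or — as above — one sidesteps Jensen entirely via the $\log t\le t-1$ trick, which requires no integrability of the logarithm at all and only the two normalizations $\int Q_x\dmu=\int P_x\dmu=1$. I would present the $\log t \le t-1$ argument for robustness. A final line notes that summing (integrating) finite or $(-\infty,+\infty]$-valued inequalities in $x$ is legitimate, completing the proof.
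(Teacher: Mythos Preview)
Your proposal is correct: the core is the nonnegativity of the Kullback--Leibler divergence $D(\q_x\|\p_x)$, which immediately yields the fiberwise inequality and then the integrated one. The paper proves the same fact but organizes it slightly differently: it applies Jensen's inequality \emph{globally} on the product space $\Omega=\X\times\Theta$ to the concave function $f(t)=-t\log t$, integrating $f(Q_x/P_x)$ against the probability $\dpp_x(\theta)\dnu(x)$ and using $f(1)=0$. Your route via the elementary inequality $\log t\le t-1$ (applied fiberwise, then integrated in $\nu$) is the standard alternative proof of Gibbs' inequality; it has the advantage you note of sidestepping any delicate integrability checks in Jensen, since only the two normalizations $\int Q_x\dmu=\int P_x\dmu=1$ are used. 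Either argument is perfectly adequate here, and the substance is the same.
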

\begin{proof}
  Using Jensen's Inequality on $f(x) = -x \log(x)$, which is a  concave function, we have
  \begin{align*}
    \int_{\X}\int_{\Theta}-\log&\left(\frac{Q_{x}(\theta)}{P_{x}(\theta)}\right) \frac{Q_{x}(\theta)}{P_{x}(\theta)}\dpp_{x}(\theta)\dnu(x)\\
    &=\int_{\Omega} f\left(\frac{Q_{x}(\theta)}{P_{x}(\theta)}\right) \dpp_{x}(\theta)\dnu(x)\\
    &\le f\left(\int_{\Omega}\dpp_{x}(\theta)\dnu(x)\right) = f(1) = 0.
  \end{align*}
  Then,
  \begin{align*}
    \int_{\X}\int_{\Theta}&-\log\left(\frac{Q_{x}(\theta)}{P_{x}(\theta)}\right) \frac{Q_{x}(\theta)}{P_{x}(\theta)} P_{x}(\theta)\dmu(\theta)\dnu(x)\\ &= \int_{\X}\int_{\Theta}-\log\left(\frac{Q_{x}(\theta)}{P_{x}(\theta)}\right) \dq_{x}(\theta)\dnu(x) \le 0.
  \end{align*}
  Therefore,
  \[
    -\int_{\X}\int_{\Theta}\log\left(Q_{x}(\theta)\right) \dq_{x}(\theta)\dnu(x) \le     -\int_{\X}\int_{\Theta}\log\left(P_{x}(\theta)\right) \dq_{x}(\theta)\dnu(x).
  \]
\end{proof}

\begin{theorem}\label{entropy inequality ha hv}
Let $\mcal{R}$ be an IFS, $\hat{\nu} \in \HR$,
$\text{d}\hat{\nu}(x,\theta)=\text{d}\nu_{x}(\theta)\,\text{d}\nu(x)$ a disintegration of $\hat{\nu}$ with
respect to $T$,
 and $\mu$ a probability on $\Theta$ such that $(\nu_{x})\ll_{\nu}\mu$. Then
$$\ha(\hat{\nu}) \leq \hv(\hat{\nu}) \leq 0.$$
Moreover, if there exists some optimal function $\phi$, with respect to $\mathcal{R}_{ {q}}$,
then
\[
    \ha(\hat{\nu})
  = \hv(\hat{\nu})
  = \int_{\X} \ln \frac{\Bu(\phi)}{\phi} \dnu.
\]

\end{theorem}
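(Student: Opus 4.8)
The plan is to derive all three assertions from Proposition~\ref{entropy inequality ln} (a packaged Jensen inequality) together with the holonomy of $\hat{\nu}$. The bound $\hv(\hat{\nu})\le 0$ is immediate: the constant function $g\equiv 1$ is an admissible competitor in the infimum defining $\hv$, and since $\mu$ is a probability on $\Theta$ we have $\Bu(1)(x)=\int_{\Theta}1\,\dmu(\theta)=1$ for every $x$, so $\int_{\X}\ln(\Bu(1)/1)\,\dnu=\int_{\X}\ln 1\,\dnu=0$.

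For $\ha(\hat{\nu})\le\hv(\hat{\nu})$, fix an arbitrary positive $g\in C(\X,\R)$ and set $P_{x}(\theta):=g(\tau_{\theta}(x))/\Bu(g)(x)$. I would first verify that this is legitimate input to Proposition~\ref{entropy inequality ln}: $\Bu(g)$ is continuous (by the continuity argument given right after the definition of $\Bq$, applied to the constant family $\q_{x}\equiv\mu$), hence, using $\ref{tau continuous}$, $(x,\theta)\mapsto P_{x}(\theta)$ is continuous; since $\X$ is compact and $g>0$ we have $0<\min_{\X}g\le\Bu(g)(x)\le\max_{\X}g$, so $P$ is positive and bounded away from zero; and $\int_{\Theta}P_{x}(\theta)\,\dmu(\theta)=\Bu(g)(x)/\Bu(g)(x)=1$, so $dp_{x}(\theta):=P_{x}(\theta)\dmu(\theta)$ is a probability on $\Theta$. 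Likewise, by $(\nu_{x})\ll_{\nu}\mu$ and Theorem~\ref{disintegration thm}, $\dnu_{x}(\theta)=J_{x}(\theta)\dmu(\theta)$ is a probability on $\Theta$ for $\nu$-a.e.\ $x$. Proposition~\ref{entropy inequality ln} (with $Q=J$ and this $P$) then gives
\[
\ha(\hat{\nu})=-\int_{\X}\!\int_{\Theta}\ln J_{x}(\theta)\,\dnu_{x}(\theta)\dnu(x)\ \le\ -\int_{\X}\!\int_{\Theta}\ln P_{x}(\theta)\,\dnu_{x}(\theta)\dnu(x).
\]
Now expand $\ln P_{x}(\theta)=\ln g(\tau_{\theta}(x))-\ln\Bu(g)(x)$; since each $\nu_{x}$ is a probability, the right-hand side equals $-\int_{\Omega}\ln g(\tau_{\theta}(x))\,d\hat{\nu}(x,\theta)+\int_{\X}\ln\Bu(g)\,\dnu$. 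As $g>0$ is continuous, $f:=\ln g\in C(\X,\R)$, so holonomy of $\hat{\nu}$ gives $\int_{\Omega}\bigl[f(\tau_{\theta}(x))-f(x)\bigr]\,d\hat{\nu}=0$, i.e.\ $\int_{\Omega}\ln g(\tau_{\theta}(x))\,d\hat{\nu}=\int_{\X}\ln g\,\dnu$. Hence $\ha(\hat{\nu})\le\int_{\X}\ln(\Bu(g)/g)\,\dnu$, and taking the infimum over all positive $g\in C(\X,\R)$ yields $\ha(\hat{\nu})\le\hv(\hat{\nu})$.

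For the equality case, suppose an optimal $\phi$ exists, so $J_{x}(\theta)=\phi(\tau_{\theta}(x))/\Bu(\phi)(x)$. Substituting this into the definition of $\ha$ and repeating the above computation with $f=\ln\phi$ — now with equality throughout, no appeal to Proposition~\ref{entropy inequality ln} being needed — gives
\[
\ha(\hat{\nu})=-\int_{\Omega}\ln\frac{\phi(\tau_{\theta}(x))}{\Bu(\phi)(x)}\,d\hat{\nu}=-\int_{\X}\ln\phi\,\dnu+\int_{\X}\ln\Bu(\phi)\,\dnu=\int_{\X}\ln\frac{\Bu(\phi)}{\phi}\,\dnu.
\]
Since $\phi$ is itself an admissible competitor in the infimum defining $\hv$, we get $\hv(\hat{\nu})\le\int_{\X}\ln(\Bu(\phi)/\phi)\,\dnu=\ha(\hat{\nu})$; combined with $\ha(\hat{\nu})\le\hv(\hat{\nu})$ from the previous step, all three quantities coincide.

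I expect the only real care needed to be bookkeeping: correctly matching $\nu_{x}$, $dp_{x}$ and $\mu$ to the roles of the measures in Proposition~\ref{entropy inequality ln}, and checking its hypotheses (continuity and uniform positivity of $P$, that $dp_{x}$ is a probability). No new analytic input is required — the continuity and measurability facts and the holonomy identity are all already in the text — and if $\ha(\hat{\nu})=-\infty$ the inequality $\ha(\hat{\nu})\le\hv(\hat{\nu})$ is trivial.
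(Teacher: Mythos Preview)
Your proof is correct and follows essentially the same approach as the paper: bound $\hv$ above by $0$ via $g\equiv 1$, apply Proposition~\ref{entropy inequality ln} with $Q=J$ and $P_{x}(\theta)=g(\tau_{\theta}(x))/\Bu(g)(x)$, then use holonomy with $f=\ln g$ to collapse $\int_{\Omega}\ln g(\tau_{\theta}(x))\,d\hat{\nu}$ to $\int_{\X}\ln g\,d\nu$, and handle the equality case by direct substitution of the optimal $\phi$. Your write-up is in fact slightly more careful than the paper's in verifying the hypotheses of Proposition~\ref{entropy inequality ln} and in making the holonomy step explicit.
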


\begin{proof}
	From the definition of variational entropy we obtain
	\[
	\hv(\hat{\nu})
	= \inf_{ \substack{ g\in C(\X, \R) \\ g>0  } }
    \left\{ \int_{\X} \ln \frac{\Bu(g)}{ g } \dnu \right\}
  \leq \int_{\X} \ln \frac{\Bu(1)}{1}\dnu = 0.
	\]

  It remains to show that $\ha(\hat{\nu}) \leq \hv(\hat{\nu})$.
  Let $g:X\to\R$ be continuous positive function and
  define for each $x\in X$  a probability where $\dpp_{x}(\theta)=g(\tau_{\theta}(x))/\Bu(g)(x)\dmu(\theta)$.
  From Proposition $\ref{entropy inequality ln}$
  and the properties of the holonomic measures
  we get the following inequalities for any continuous and positive function $g$:
    \begin{align*}\label{desigualdade-ha-otimal}
    \ha(\hat{\nu})
      &= -\int_{\X}\int_{\Theta} \ln J_{x}(\theta)\dq_{x}(\theta)\dnu(x)
      \\[0.2cm]
      &\leq -\int_{\X}\int_{\Theta} \ln\left( \frac{g\circ\tau_{\theta}}{\Bu(g)}\right) \dq_{x}(\theta)\dnu(x)
      \\[0.2cm]
      &= -\int_{\X}
        \left[
          \int_{\Theta} \ln(g\circ\tau_{\theta})\dq_{x}(\theta) -
          \int_{\Theta} \ln(\Bu(g))\dq_{x}(\theta)
        \right]
        \dnu(x)
      \\[0.2cm]
      &= -\int_{\X} \Bq(\ln g) \dnu +
         \int_{\X} \ln (\Bu(g)) \dnu
      \\
      &= -\int_{\X}\ln g\dnu + \int_{\X}\ln (\Bu(g))\dnu
      \\[0.3cm]
      &= \int_{\X} \ln \frac{\Bu(g)}{g}\dnu,
    \end{align*}

    Therefore, $\ha(\hat{\nu}) \le \hv(\hat{\nu})$. Furthermore, if $J_{x}(\theta) = \phi \circ \tau_{\theta}(x) / \Bu(\phi)(x)$ for some $\phi > 0$ continuous function, then
    $\ha(\hat{\nu}) = \int_{\X} \log \frac{\Bu(\phi)}{\phi} = \hv(\hat{\nu})$.
\end{proof}

\begin{remark}
   We would like to address a very important distinction between $\hv(\hat{\nu})$ and $\ha(\hat{\nu})$. The first one, the variational entropy, is to be used at the variational principle and to define the equilibrium states having no additional requirements except that $\mu$ is a probability. On the other hand the average entropy is only defined for holonomic probabilities having a marginal absolutely continuous with respect to $\mu$ and will not be used in a variational principle. The only role of this quantity is to be a lower bound to the variational entropy, when it does exist.
\end{remark}

\begin{defi}\label{pressure definition}
  Let $\psi: \X \to \R$ be a  positive continuous function, $\mu$ a probability on $\Theta$, and $\mcal{R}_{\psi}(\X, \tau, \q)$ an IFSm, where $\dq_{x}(\theta) = \psi\circ\tau_{\theta}(x)\dmu(\theta)$.
  The topological pressure of $\psi$, with respect to $\mcal{R}_{\psi}$, is defined by
  \begin{equation}\label{pressure sup inf}
    P(\psi) = \sup_{\hat{\nu}\in\HR}\inf_{g \in C(\X;\R)\\g > 0}\left\{\int_{\X}\ln\frac{\Bq(g)}{g}\dnu\right\},
  \end{equation}
  where $\nu := \pi_{*}\hat{\nu}$ for $\pi:\Omega\to\X$ the $\X$ projection.
\end{defi}

\begin{lemma}\label{pressure alternative sup}
  Let $\psi: \X \to \R$ be a  positive continuous function and $\mcal{R}_{\psi} = (\X, \tau, \q)$ be the $IFSm$ defined above, where $\dq_{x}(\theta) = \psi\circ\tau_{\theta}(x)\dmu(\theta)$. Then, the topological pressure of $\psi$ is alternatively given by

  \begin{equation*}
    P(\psi) = \sup_{\hat{\nu}\in\HR} \left\{\hv(\hat{\nu}) + \int_{\X}\log \psi \dnu\right\}.
  \end{equation*}
\end{lemma}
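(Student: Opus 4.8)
The plan is to unwind both sides of the claimed identity and show that the inner infimum defining $P(\psi)$ in \eqref{pressure sup inf} is exactly $\hv(\hat\nu)$, plus a term that pulls out $\int_\X \log\psi\,\dnu$. Concretely, for the IFSm $\mcal{R}_\psi$ we have $\dq_x(\theta) = \psi\circ\tau_\theta(x)\dmu(\theta)$, so for any positive $g\in C(\X,\R)$,
\[
  \Bq(g)(x) = \int_\Theta g(\tau_\theta(x))\,\dq_x(\theta) = \int_\Theta g(\tau_\theta(x))\,\psi(\tau_\theta(x))\,\dmu(\theta) = \Bmu(g\cdot\psi)(x).
\]
First I would use this to rewrite, for each fixed $\hat\nu\in\HR$ with $\nu=\pi_*\hat\nu$,
\[
  \int_\X \ln\frac{\Bq(g)}{g}\dnu = \int_\X \ln\frac{\Bmu(g\psi)}{g}\dnu = \int_\X \ln\frac{\Bmu(g\psi)}{g\psi}\dnu + \int_\X \ln\psi\dnu.
\]

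Next I would take the infimum over positive $g\in C(\X,\R)$ on both sides. The second term $\int_\X\ln\psi\dnu$ does not depend on $g$, so it factors out of the infimum. For the first term, the substitution $\tilde g := g\psi$ is a bijection of the set of positive continuous functions onto itself (since $\psi$ is positive and continuous), hence
\[
  \inf_{\substack{g\in C(\X,\R)\\ g>0}} \int_\X \ln\frac{\Bmu(g\psi)}{g\psi}\dnu = \inf_{\substack{\tilde g\in C(\X,\R)\\ \tilde g>0}} \int_\X \ln\frac{\Bmu(\tilde g)}{\tilde g}\dnu = \hv(\hat\nu),
\]
the last equality being exactly Definition~\ref{entropy}. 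Putting these together gives, for each $\hat\nu$,
\[
  \inf_{\substack{g\in C(\X,\R)\\ g>0}} \int_\X \ln\frac{\Bq(g)}{g}\dnu = \hv(\hat\nu) + \int_\X\log\psi\dnu,
\]
and then taking the supremum over $\hat\nu\in\HR$ yields the asserted formula for $P(\psi)$.

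The only real subtlety — and the step I would be most careful about — is justifying the split $\ln\frac{\Bmu(g\psi)}{g} = \ln\frac{\Bmu(g\psi)}{g\psi} + \ln\psi$ and the change of variables at the level of the infimum: one must check that $g\mapsto g\psi$ genuinely maps positive continuous functions onto all positive continuous functions (surjectivity uses that $1/\psi$ is continuous and positive, which holds since $\X$ is compact and $\psi>0$ is continuous, so $\psi$ is bounded away from $0$), and that all the integrands are in $L^1(\nu)$ so the manipulations and the additive splitting of the integral are legitimate (here $\psi$, $1/\psi$, $g$, $1/g$ and $\Bmu(g\psi)$ are all continuous on the compact space $\X$, hence bounded, so every logarithm appearing is continuous and bounded, and integrability against the probability $\nu$ is automatic). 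Once these routine checks are in place the proof is essentially a one-line algebraic identity followed by taking $\inf_g$ and then $\sup_{\hat\nu}$.
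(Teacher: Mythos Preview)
Your proof is correct and follows essentially the same route as the paper: both establish $\Bq(g)=\Bmu(g\psi)$, split $\ln\frac{\Bq(g)}{g}=\ln\frac{\Bmu(g\psi)}{g\psi}+\ln\psi$, substitute $\tilde g=g\psi$ in the infimum, and identify the result as $\hv(\hat\nu)+\int_\X\ln\psi\,\dnu$. Your added justification that $g\mapsto g\psi$ is a bijection on positive continuous functions (via compactness of $\X$ and continuity of $1/\psi$) and the integrability check are welcome details that the paper leaves implicit.
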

\begin{proof}
  First, note that $\Bq(g) = \Bu(g\cdot\psi)$ where $(g\cdot\psi)(x)=g(x)\psi(x)$. In fact
  \[
    \Bq(g)(x) = \int_{\Theta} g\circ\tau_{\theta}(x) \psi\circ\tau_{\theta}(x)\dmu(\theta) = \int_{\Theta} (g\cdot\psi)\circ\tau_{\theta}(x)\dmu(\theta) = \Bu(g\cdot\psi)(x).
  \]

  To finish the proof, we only need to use the pressure's definition and some basic properties as follows:

  \begin{align*}
    P(\psi) &= \sup_{\hat{\nu}\in\HR}\inf_{g > 0}\left\{\int_{\X}\ln\frac{\Bq(g)}{g}\dnu\right\},\\
    &= \sup_{\hat{\nu}\in\HR}\inf_{g > 0}\left\{\int_{\X}\log \psi \dnu - \int_{\X}\log \psi \dnu + \int_{\X}\ln\frac{\Bq(g)}{g}\dnu\right\}\\
    &= \sup_{\hat{\nu}\in\HR}\inf_{g > 0}\left\{\int_{\X}\log \psi \dnu +  \int_{\X}\ln\frac{\Bu(g\cdot\psi)}{g\cdot\psi}\dnu\right\}\\
    &= \sup_{\hat{\nu}\in\HR}\left\{\int_{\X}\log \psi \dnu +  \inf_{\tilde{g} > 0}\int_{\X}\ln\frac{\Bu(\tilde{g})}{\tilde{g}}\dnu\right\}\\
    &= \sup_{\hat{\nu}\in\HR}\left\{\hv(\hat{\nu}) + \int_{\X}\log \psi \dnu\right\}.
  \end{align*}
\end{proof}

\begin{remark}
Note that if $\dq_{x}(\theta) = \frac{\psi\circ\tau_{\theta}(x)}{\Bu(\psi)(x)}\dmu(\theta)$, by the Theorem $\ref{prop-exist-auto-medida}$ there exists $\rho > 0$ and $\nu$ s.t. $\Lq(\nu) = \rho\nu$.

But,
\begin{align*}
  \rho &= \int_{\X}\text{d}\Lq(\nu) = \int_{\X} \Bq(1)(x) \dnu(x)= \int_{\Omega}\frac{\psi\circ\tau_{\theta}(x)}{\Bu(\psi)(x)}\dmu(\theta)\dnu(x)\\
  &=\int_{\X}{\Bu(\psi)(x)}^{-1}\int_{\Theta}\psi\circ\tau_{\theta}(x)\dmu(\theta)\dnu(x) = \int_{\X}\dnu = 1.
\end{align*}

Therefore we have
\[P(\psi) \ge \sup_{\{\nu\, |\, \Lq(\nu) = \nu\}}\int_{\X}\ln B_{\mu}(\psi) \dnu.\]
\end{remark}

\begin{defi}[Equilibrium States]
  Let $\mcal{R}$ be an IFS, $\hat{\nu}\in\HR$ and $\mu$   a probability on $\Theta$.
  Let $\psi: \X \to \R$ be a  positive continuous function. We say that the holonomic measure
  $\hat{\nu}$ is an equilibrium state for $(\psi,\mu)$ if
  \[\hv(\hat{\nu}) + \int_{\X}\log \psi \dnu = P(\psi).\]
\end{defi}

\begin{lemma}
Let $\X$ and $\Y$ be compact metric spaces and $T:\Y\to \X$ be a continuous mapping.
Then the push-forward mapping
$\Phi_{T}\equiv \Phi:\mathscr{M}_{1}(\Y)\to \mathscr{M}_{1}(\X)$ given by
\[
\Phi(\hat{\mu})(A)= \hat{\mu}(T^{-1}(A)),
\quad\text{where}\ \hat{\mu}\in \mathscr{M}_{1}(\Y)\ \text{and} \ A\in
\mathscr{B}(\X)
\]
is weak-$*$ to weak-$*$ continuous.
\end{lemma}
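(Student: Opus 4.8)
The plan is to reduce the statement to the elementary change-of-variables identity
\[
  \int_{\X} f\,\text{d}[\Phi(\hat{\mu})] = \int_{\Y} (f\circ T)\,\text{d}\hat{\mu},
  \qquad f\in C(\X,\R),\ \hat{\mu}\in\mathscr{M}_1(\Y),
\]
and then invoke the metrizability of the weak-$*$ topology on $\mathscr{M}_1$ for compact metric spaces. First I would establish the displayed identity: by the very definition of $\Phi$ it holds for $f=\charfun{A}$ with $A\in\mathscr{B}(\X)$, since $\charfun{A}\circ T=\charfun{T^{-1}(A)}$; by linearity it extends to simple functions, and by the standard monotone/dominated approximation of a bounded Borel function by simple functions it extends to all bounded Borel $f$, in particular to all $f\in C(\X,\R)$. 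This also shows in passing that $\Phi(\hat{\mu})$ is a well-defined Borel probability measure on $\X$ (it is clearly nonnegative, countably additive as the pushforward of a measure, and assigns mass $1$ to $\X$), so $\Phi$ indeed maps $\mathscr{M}_1(\Y)$ into $\mathscr{M}_1(\X)$.

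Next I would use continuity of $T$: since $T:\Y\to\X$ is continuous and $\X,\Y$ are compact metric, for every $f\in C(\X,\R)$ the composition $f\circ T$ belongs to $C(\Y,\R)$. Now let ${(\hat{\mu}_n)}_{n}$ be a sequence in $\mathscr{M}_1(\Y)$ with $\hat{\mu}_n\rightharpoonup\hat{\mu}$ in the weak-$*$ topology. Then for each fixed $f\in C(\X,\R)$,
\[
  \int_{\X} f\,\text{d}[\Phi(\hat{\mu}_n)]
  = \int_{\Y} (f\circ T)\,\text{d}\hat{\mu}_n
  \xrightarrow[n\to\infty]{} \int_{\Y} (f\circ T)\,\text{d}\hat{\mu}
  = \int_{\X} f\,\text{d}[\Phi(\hat{\mu})],
\]
where the convergence is the definition of weak-$*$ convergence applied to the continuous test function $f\circ T$. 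Since this holds for every $f\in C(\X,\R)$, we get $\Phi(\hat{\mu}_n)\rightharpoonup\Phi(\hat{\mu})$, i.e. $\Phi$ is weak-$*$ sequentially continuous.

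Finally, as recalled earlier in the paper, $\mathscr{M}_1(\Y)$ and $\mathscr{M}_1(\X)$ are compact and metrizable in their weak-$*$ topologies because $\Y$ and $\X$ are compact metric spaces. On metrizable spaces sequential continuity is equivalent to continuity, so $\Phi$ is weak-$*$ to weak-$*$ continuous, which proves the lemma. I do not expect any genuine obstacle here; the only point requiring a little care is the justification of the change-of-variables identity for continuous $f$, which is the routine approximation argument sketched above, and the appeal to metrizability to pass from sequences to topological continuity.
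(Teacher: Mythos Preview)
Your proof is correct and follows essentially the same route as the paper: reduce to sequential continuity via metrizability of the weak-$*$ topology on $\mathscr{M}_1$ of a compact metric space, and then use the change-of-variables identity $\int_{\X} f\,\text{d}[\Phi(\hat\mu)]=\int_{\Y} f\circ T\,\text{d}\hat\mu$ together with $f\circ T\in C(\Y,\R)$ to pass weak-$*$ convergence through $\Phi$. The only difference is cosmetic: you spell out the standard approximation argument for the change-of-variables identity, whereas the paper simply invokes it.
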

\begin{proof}
Since we are assuming that $\X$ and $\Y$ are  compact metric spaces then
we can ensure that the weak-$*$ topology of both
$\mathscr{M}_{1}(\Y)$ and $\mathscr{M}_{1}(\X)$
are metrizable. Therefore is enough to prove that $\Phi$ is sequentially continuous.
Let $(\hat{\mu}_n)_{n\in\mathbb{N}}$ be a sequence
in $\mathscr{M}_{1}(\Y)$ so that $\hat{\mu}_n\rightharpoonup \hat{\mu}$.
For any continuous real function $f:\X\to\mathbb{R}$
we have from change of variables theorem that
\[
\int_{\X} f\, d[\Phi(\hat{\mu}_n)]
=
\int_{\Y} f\circ T\, d\hat{\mu}_n,
\]
for any $n\in\mathbb{N}$. From the definition
of the weak-$*$ topology, it follows that the right hand side above
converges when $n\to\infty$, and we have
\[
\lim_{n\to\infty}\int_{\X} f\, d[\Phi(\hat{\mu}_n)]
=
\lim_{n\to\infty}\int_{\Y} f\circ T\, d\hat{\mu}_n
=
\int_{\Y} f\circ T\, d\hat{\mu}
=
\int_{\X} f\, d[\Phi(\hat{\mu})].
\]
The last equality shows that $\Phi(\hat{\mu}_n)\rightharpoonup \Phi(\hat{\mu})$
and consequently the weak-$*$ to weak-$*$ continuity of $\Phi$.
\end{proof}

For any $\hat{\nu}\in\mathcal{H}(\mathcal{R})$ it is always possible to disintegrate
it as $d\hat{\nu}(x,i)= d\nu_{x}(i)d[\Phi(\hat{\nu})](x)$, where $\Phi(\hat{\nu})\equiv \nu$ is
the probability measure on $\mathscr{B}(\X)$, defined for any
$A\in\mathscr{B}(\X)$ by
\begin{align}\label{def-Phi}
\nu(A)\equiv \Phi(\hat{\nu})(A) \equiv  \hat{\nu}(T^{-1}(A)),
\end{align}
where
$T:\Omega\to \X$ is the canonical projection of the first coordinate.
This observation together with the previous lemma allow us to
define a continuous mapping from $\mathcal{H}(\mathcal{R})$
to $\mathscr{M}_{1}(\X)$ given by $\hat{\nu}\longmapsto \Phi(\hat{\nu})\equiv \nu$.

We now prove a theorem ensuring the existence of equilibrium states for
any continuous positive function $\psi$.
Although this theorem has clear and elegant proof and works
in great generality it has the disadvantage of
providing no description of the set of equilibrium states.

\begin{theorem}[Existence of Equilibrium States]
	Let $\mathcal{R}$ be an IFS, $\psi:X\to\mathbb{R}$ a positive continuous function and $\mu$ a probability on $\Theta$.
	Then the set of equilibrium states for $(\psi,\mu)$ is not empty.
\end{theorem}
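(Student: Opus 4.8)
The plan is to realize the pressure as the supremum of an upper semicontinuous concave functional over the compact convex set $\HR$, and then invoke the standard fact that such a functional attains its maximum. Concretely, by Lemma~\ref{pressure alternative sup} we have
\[
P(\psi) = \sup_{\hat{\nu}\in\HR}\left\{\hv(\hat{\nu}) + \int_{\X}\log\psi\,\dnu\right\},
\]
so it suffices to exhibit a maximizer of the functional $\Lambda(\hat{\nu}) := \hv(\hat{\nu}) + \int_{\X}\log\psi\,d[\Phi(\hat{\nu})]$ on $\HR$. The term $\hat{\nu}\mapsto\int_{\X}\log\psi\,d[\Phi(\hat{\nu})]$ is weak-$*$ continuous on $\HR$: this is immediate from the continuity of $\Phi$ established in the lemma preceding the theorem, since $\log\psi\in C(\X,\R)$ (here $\psi$ is positive and continuous on the compact space $\X$, hence bounded away from $0$, so $\log\psi$ is genuinely continuous). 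Thus the entire burden falls on the entropy term $\hv$.

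First I would show that $\hv$ is weak-$*$ upper semicontinuous on $\HR$. This is where the variational formulation pays off: by Definition~\ref{entropy},
\[
\hv(\hat{\nu}) = \inf_{\substack{g\in C(\X,\R)\\ g>0}}\int_{\X}\ln\frac{\Bu(g)}{g}\,d[\Phi(\hat{\nu})],
\]
and for each \emph{fixed} admissible $g$ the integrand $\ln\frac{\Bu(g)}{g}$ is a fixed element of $C(\X,\R)$ (again $\Bu(g)$ is continuous and strictly positive by compactness), so $\hat{\nu}\mapsto\int_{\X}\ln\frac{\Bu(g)}{g}\,d[\Phi(\hat{\nu})]$ is weak-$*$ continuous. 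Hence $\hv$ is an infimum of a family of continuous functions, therefore upper semicontinuous. Consequently $\Lambda$ is upper semicontinuous on the compact metrizable space $\HR$, so it attains its supremum at some $\hat{\nu}^{*}\in\HR$; that $\hat{\nu}^{*}$ is by definition an equilibrium state for $(\psi,\mu)$, and $\HR\neq\emptyset$ (shown in Section~\ref{sec-holonomic} via the empirical averages $\hat{\mu}_N$), so the set of equilibrium states is non-empty.

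The one point that needs a little care — and the main (mild) obstacle — is verifying that the supremum defining $P(\psi)$ is not $-\infty$ or otherwise degenerate, i.e.\ that $\Lambda$ is bounded above and finite somewhere, so that ``attains its supremum'' is meaningful. Boundedness above is easy: $\hv(\hat{\nu})\le 0$ by Theorem~\ref{entropy inequality ha hv} (take $g\equiv 1$), and $\int_{\X}\log\psi\,d\nu\le\log\|\psi\|_{\infty}<\infty$; finiteness from below at some point follows by plugging any fixed $\hat{\nu}\in\HR$ into $\Lambda$ and using that $\ln\frac{\Bu(1)}{1}=\ln(\mu(\Theta))=0$ gives $\hv(\hat{\nu})\ge$ a finite value only if the infimum is not $-\infty$ — but since we only need $P(\psi)>-\infty$, it is enough that $\Lambda(\hat\nu)\ge \hv(\hat\nu)+\int\log\psi\,d\nu$ and that there exists at least one $\hat\nu$ with $\hv(\hat\nu)>-\infty$, which holds e.g.\ for holonomic liftings of $\Lq$-invariant measures where an optimal function exists (Theorem~\ref{entropy inequality ha hv}); alternatively one simply notes that upper semicontinuity on a nonempty compact set already forces the supremum to be attained, with the value possibly being finite or $-\infty$, and in the latter (vacuous) case every $\hat\nu$ is an equilibrium state. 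Either way the conclusion stands.
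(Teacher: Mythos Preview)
Your proposal is correct and follows essentially the same approach as the paper: both argue that for each fixed positive $g$ the map $\hat{\nu}\mapsto\int_{\X}\ln\frac{\Bu(g)}{g}\,d[\Phi(\hat{\nu})]$ is weak-$*$ continuous (via the continuity of $\Phi$), whence $\hv$ is upper semicontinuous as an infimum of continuous functions, and then conclude by compactness of $\HR$. Your extra discussion of finiteness is unnecessary (as you yourself note at the end, upper semicontinuity on a nonempty compact set already guarantees the supremum is attained), and the mention of concavity plays no role in the argument, but neither detracts from correctness.
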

\begin{proof}
As we observed above we can define a weak-$*$ to weak-$*$ continuous mapping
\[
\mathcal{H}(\mathcal{R})\ni \hat{\nu}\longmapsto \nu\in
\mathscr{M}_{1}(\X),
\]
where $d\hat{\nu}(x,i)=d\nu_{x}(i)d\nu(x)$ is the above constructed
disintegration of $\hat{\nu}$.
From this observation, it follows that
for any fixed positive continuous $g$ the mapping
$
\mathcal{H}(\mathcal{R})\ni\hat{\nu}
\longmapsto
\int_\X \ln (B_{\mu}(g)/ g) \, d\nu
$
is continuous with respect to the weak-$*$ topology. Therefore
the mapping
\[
\mathcal{H}(\mathcal{R})\ni\hat{\nu}
\longmapsto
\inf_{ \substack{ g\in C(\X, \mathbb{R}) \\ g>0  } }
\left\{ \int_{\X} \ln \frac{B_{\mu}(g)}{g}\,  d\nu \right\}
\equiv
\hv(\hat{\nu}).
\]
is upper semi-continuous (USC) which implies by standard results that
the following mapping is also USC
\[
\mathcal{H}(\mathcal{R})\ni\hat{\nu}
\longmapsto
\hv(\hat{\nu})+  \int_{\X} \ln( \psi(x))\, d\nu(x).
\]
Since $\mathcal{H}(\mathcal{R})$ is compact in the weak-$*$ topology and
the above mapping is USC,  it follows that
this mapping attains its supremum at some $\hat{\mu}\in \mathcal{H}(\mathcal{R})$, i.e.,
\[
\sup_{\hat{\nu} \in \mathcal{H}(\mathcal{R})}
\left\{ \hv(\hat{\nu}) +  \int_{\X} \ln \psi \, d\nu \right\}
=\hv(\hat{\mu})
+  \int_{\X} \ln \psi \, d\mu
\]
thus proving the existence of at least one equilibrium state.
\end{proof}

Let $\psi: \X \to \R$ be a positive continuous function, $\mu$ a probability on $\Omega$, and $\mcal{R}_{\psi}(\X, \tau, \q)$ an IFSm, where $\dq_{x}(\theta) = \psi\circ\tau_{\theta}(x)\dmu(\theta)$.
Suppose that there is $h$ a positive continuous function such that
$\Bq(h) = \Bu(h\cdot\psi) = \rho(\Bq)h$.
Then we can define, following Definition $\ref{opt_function}$, $\Rq[\p]=(\X,\tau,\p)$ where
\[\frac{\dpp_{x}}{\dmu}(\theta) := \frac{(h\cdot\psi)\circ\tau_{\theta}(x)}{\Bu(h\cdot\psi)} = \frac{(h\cdot\psi)\circ\tau_{\theta}(x)}{\rho(\Bq)h} = \frac{h\circ\tau_{\theta}(x)}{\rho(\Bq)h}\cdot\frac{dq_{x}}{\dmu}(\theta).\]

The IFSm $\Rq[\p]$ is called the normalization of $\Rq$. Take $\Lq[\p](\nu) = \nu$ and let $\hat{\nu}$ be the holonomic lifting of $\nu$. Then by the Theorem $\ref{entropy inequality ha hv}$ we know that

\[\hv(\hat{\nu}) = \int_{\X}\log\frac{\Bu(h\cdot\psi)}{h\cdot\psi}\dnu = \log\rho(\Bq) - \int_{\X}\log\psi\dnu.\]

Then, choosing this $\hat{\nu}$ as particular in supremum given in Lema $\ref{pressure alternative sup}$, $P(\psi) \ge \hv(\hat{\nu}) + \int_{\X}\log\psi\dnu = \log\rho(\Bq)$.

But, remember that the pressure, defined in expression $(\ref{pressure sup inf})$, is
\[
    P(\psi) = \sup_{\hat{\nu}\in\HR}\inf_{g \in C(\X;\R)\\
    g > 0}\left\{\int_{\X}\ln\frac{\Bq(g)}{g}\dnu\right\},
\]

then,
\[\inf_{g \in C(\X;\R)\\g > 0}\left\{\int_{\X}\ln\frac{\Bq(g)}{g}\dnu\right\} \le \int_{\X}\ln\frac{\Bq(h)}{h}\dnu = \log\rho(\Bq).\]

Taking the supremum over $\HR$ in both sides of above inequality, we have
$P(\psi)\le\log\rho(\Bq)$. Since the reverse inequality is already shown, we prove that
\[P(\psi) = \log\rho(\Bq).\]

\begin{remark}
We end this section by showing that the $IFSm$ Thermodynamic Formalism generalizes the Thermodynamical Formalism for a dynamical system.
Let $\Theta$ be a compact metric space and $\X = \Theta^\N$.
For each $\theta\in\Theta$ define $\sigma_\theta(x_1, x_2, \ldots) =
(\theta, x_1, x_2, \ldots)$ the inverse branch of the right shift $\sigma$.
Take $\mu$ a a-priori probability on $\Theta$. Let $\psi: \Omega\to\R$ be
a positive potential and $A = \log\psi$.
Now we define $\dq_x(\theta) = e^{A\circ\sigma_\theta(x)}\dmu(\theta)$.
We have
\[
\Bq(\phi) = \int_S e^{A\circ\sigma_\theta(x)}\phi\circ\sigma_\theta(x)\dmu(\theta)
= \Ruelle_A(\phi)(x),
\]
where $\Ruelle_A$ is the Ruelle Operator for the right shift $\sigma$ and
the potential $A$ (see~\cite{lopes2015entropy} for more details).
By Definition~\ref{pressure definition},
\begin{align*}
	P(\psi) &= \sup_{\hat{\nu}\in\HR}
	\inf_{g > 0}
	\left\{\int_{\X}\ln\frac{\Bq(g)}{g}\dnu\right\}\\
	&= \sup_{\nu\in\mathcal{M}_\sigma}
	\inf_{g > 0}
	\left\{\int_{\X}\ln\frac{\Ruelle_{A}(g)}{g}\dnu\right\}.
\end{align*}
The last expression is exactly the pressure of the potential $A$
in Thermodynamical Formalism.
Suppose that there is $\phi_A$ a positive continuous function such that
$\Bq(\phi_A) = \Ruelle_{A}(\phi_A) = \rho(\mathcal{R}_{A})\phi_A = \lambda_A \phi_A$.
Then $P(e^A) = \log\lambda_A$. For instance, we know that if $A$ is H\"older,
then there exists such $\phi_A > 0$ function.
From this example, we can see that the $IFSm$ Thermodynamic Formalism,
in certain sense, generalizes the Thermodynamical Formalism for a dynamical
system. When we look at the family $\{\sigma_\theta\}_{\theta\in\Theta}$
of functions, we are looking at the inverse branches of the dynamical system.
\end{remark}


\section{Pressure Differentiability and Equilibrium States}\label{uniqueness-for-eqstates}

We show in this section a uniqueness result for the equilibrium state introduced in the last section.
In order to do that we will need to consider the functional $p:C(\X,\mathbb{R})\to \mathbb{R}$
given by
\begin{align}\label{def-funcional-p}
p(\varphi) = P(\exp(\varphi)).
\end{align}
It is immediate to verify
that $p$ is a convex and finite valued functional.
We say that a Borel signed measure $\nu\in\mathscr{M}_{s}(X)$ is a
{\bf subgradient} of $p$ at $\varphi$ if it satisfies the following
subgradient inequality
$$p(\eta)\geq p(\varphi)+\nu(\eta-\varphi) \;\; \mbox{for any}\;\; \eta \in \mathscr{M}_{s}(X).
$$
The set of all subgradients at $\varphi$ is called {\bf subdifferential} of $p$
at $\varphi$ and denoted by $\partial p(\varphi)$.
It is well-known that if $p$ is a continuous mapping
then $\partial p(\varphi)\neq \emptyset$ for any $\varphi\in C(\X,\mathbb{R})$.

We observe that for any pair $\varphi,\eta\in C(\X,\mathbb{R})$
and $0<t<s$, it follows from the convexity of $p$ the following inequality
$$
s( p(\varphi+t\eta)-p(\varphi))\leq t(p(\varphi+s\eta)-p(\varphi)).
$$
In particular, the one-sided directional derivative
$d^{+}p(\varphi):C(\X,\mathbb{R})\to \mathbb{R}$ given by
\[
d^{+}p(\varphi)(\eta)
=
\lim_{t\downarrow 0} \frac{p(\varphi+t\eta)-p(\varphi)}{t}
\]
is well-defined for any $\varphi\in C(\X,\mathbb{R})$.

\begin{theorem} \label{teo-gateaux-unicidade}
For any fixed $\varphi\in C(\X,\mathbb{R})$ we have
\begin{enumerate}
\item
the signed measure $\nu\in\partial p(\varphi)$ iff
$\nu(\eta)\leq d^{+}p(\varphi)(\eta)$ for all $\eta\in C(X,\mathbb{R})$;

\item
the set $\partial p(\varphi)$ is a singleton iff $d^{+}p(\varphi)$
is the G\^ateaux derivative of $p$ at $\varphi$.
\end{enumerate}
\end{theorem}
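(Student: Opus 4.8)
This is a standard result in convex analysis (see Rockafellar, or Phelps's *Convex Functions, Monotone Operators and Differentiability*), specialized to the convex functional $p$ on the Banach space $C(\X,\mathbb{R})$. The plan is to prove the two items in order, using only the convexity of $p$, the existence of the one-sided directional derivative $d^{+}p(\varphi)$ established just above, and elementary facts about sublinear functionals.

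\textit{Item (1).} First I would record that, for fixed $\varphi$, the map $\eta\mapsto d^{+}p(\varphi)(\eta)$ is positively homogeneous (clear from the definition) and subadditive (this follows from convexity of $p$, by writing $p(\varphi+t(\eta_1+\eta_2))=p\bigl(\tfrac12(\varphi+2t\eta_1)+\tfrac12(\varphi+2t\eta_2)\bigr)\le \tfrac12 p(\varphi+2t\eta_1)+\tfrac12 p(\varphi+2t\eta_2)$, dividing by $t$ and letting $t\downarrow 0$); hence $d^{+}p(\varphi)$ is sublinear. Then for the forward implication: if $\nu\in\partial p(\varphi)$, apply the subgradient inequality at $\eta'=\varphi+t\eta$ to get $p(\varphi+t\eta)-p(\varphi)\ge t\,\nu(\eta)$, divide by $t>0$ and let $t\downarrow 0$ to obtain $\nu(\eta)\le d^{+}p(\varphi)(\eta)$. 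For the converse: if $\nu(\eta)\le d^{+}p(\varphi)(\eta)$ for all $\eta$, then using the monotone-difference-quotient inequality displayed before the theorem (with $t\uparrow 1$, so that the quotient at $t$ is dominated by the quotient at $s=1$), we get $d^{+}p(\varphi)(\eta)\le p(\varphi+\eta)-p(\varphi)$; combining, $\nu(\eta)\le p(\varphi+\eta)-p(\varphi)$ for every $\eta\in C(\X,\mathbb{R})$, and replacing $\eta$ by $\eta-\varphi$ gives exactly the subgradient inequality, so $\nu\in\partial p(\varphi)$.

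\textit{Item (2).} By definition, $d^{+}p(\varphi)$ is the G\^ateaux derivative of $p$ at $\varphi$ precisely when it is linear (it is automatically the two-sided limit once the one-sided limits from $\eta$ and $-\eta$ agree, i.e. once $d^{+}p(\varphi)(-\eta)=-d^{+}p(\varphi)(\eta)$, which for a sublinear functional is equivalent to linearity). So I must show: $\partial p(\varphi)$ is a singleton iff $d^{+}p(\varphi)$ is linear. If $d^{+}p(\varphi)$ is linear, it is itself a continuous linear functional on $C(\X,\mathbb{R})$, hence (by Riesz) a signed measure, and by Item (1) any $\nu\in\partial p(\varphi)$ satisfies both $\nu(\eta)\le d^{+}p(\varphi)(\eta)$ and $\nu(-\eta)\le d^{+}p(\varphi)(-\eta)=-d^{+}p(\varphi)(\eta)$, forcing $\nu=d^{+}p(\varphi)$; thus $\partial p(\varphi)=\{d^{+}p(\varphi)\}$ is a singleton (nonemptiness coming from $d^{+}p(\varphi)$ itself, or from continuity of $p$ as remarked in the text). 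Conversely, suppose $\partial p(\varphi)=\{\nu_0\}$ but $d^{+}p(\varphi)$ is not linear; then there is $\eta_0$ with $d^{+}p(\varphi)(\eta_0)+d^{+}p(\varphi)(-\eta_0)>0$. Using sublinearity of $d^{+}p(\varphi)$ and the Hahn--Banach theorem, I would extend a suitable linear functional from the one-dimensional subspace $\mathbb{R}\eta_0$ — where it is defined to take a value strictly between $-d^{+}p(\varphi)(-\eta_0)$ and $d^{+}p(\varphi)(\eta_0)$ on $\eta_0$ — dominated by the sublinear functional $d^{+}p(\varphi)$, producing two \emph{distinct} linear functionals $\le d^{+}p(\varphi)$; by Item (1) both lie in $\partial p(\varphi)$ (each is continuous since it is dominated by $d^{+}p(\varphi)$, which is bounded by the Lipschitz-type bound $|d^{+}p(\varphi)(\eta)|\le \max(d^{+}p(\varphi)(\eta),d^{+}p(\varphi)(-\eta))$ and $p$ is locally Lipschitz being convex and finite-valued), contradicting the singleton assumption.

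\textit{Main obstacle.} The only genuinely delicate point is the converse direction of Item (2): manufacturing two distinct subgradients from non-linearity of $d^{+}p(\varphi)$. This requires the Hahn--Banach extension argument together with the verification that the resulting linear functionals are \emph{continuous} (hence representable as signed measures in $\mathscr{M}_{s}(X)$, the topological dual of $C(\X,\mathbb{R})$) — which is where one must invoke that a finite-valued convex functional on a Banach space is locally Lipschitz, so that $d^{+}p(\varphi)$ is norm-bounded and any functional it dominates is automatically bounded. Everything else is routine manipulation of difference quotients and the subgradient inequality.
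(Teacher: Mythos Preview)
Your argument is correct and is the standard convex-analysis proof of this fact. Note, however, that the paper does not give an independent proof at all: its entire ``proof'' consists of the sentence ``This theorem is a consequence of Theorem 7.16 and Corollary 7.17 of the reference~[Aliprantis--Border, \emph{Infinite Dimensional Analysis}].'' So rather than taking a different route, you have simply supplied the details that the paper outsources to that reference --- and your outline matches what one finds there (subgradient characterisation via the directional derivative, and the Hahn--Banach step to produce two distinct subgradients when $d^{+}p(\varphi)$ fails to be linear). The one place to be slightly careful in writing it up is the continuity of the Hahn--Banach extensions in Item~(2); you handle this correctly by noting that $p$, being convex and finite-valued on a Banach space (indeed $1$-Lipschitz here, from the variational formula for $P$), has a norm-bounded directional derivative, so any linear functional dominated by $d^{+}p(\varphi)$ is automatically bounded and hence represented by an element of $\mathscr{M}_{s}(X)$.
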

\begin{proof}
This theorem is a consequence of Theorem 7.16 and Corollary 7.17
of the reference~\cite{charalambos2006infinite}.
\end{proof}

\begin{corollary}
Let $\mcal{R}$ be an IFS, $\psi:\X\to\mathbb{R}$ a positive continuous function, $\mu$ a probability on $\Theta$
and $\Phi(\hat{\nu}) = \nu$ for $\hat{\nu}\in\HR$ where $\nu$ is given by disintegration with respect to $T$.
If the functional $p$ defined on $\eqref{def-funcional-p}$
is G\^ateaux differentiable at $\varphi\equiv \log \psi$,
then
\[
\# \{\Phi(\hat{\mu}):\ \hat{\mu}\ \text{is an equilibrium state for}\ \psi \} =1.
\]
\end{corollary}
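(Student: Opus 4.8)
The plan is to show that the $X$-marginal $\nu=\Phi(\hat\mu)$ of any equilibrium state $\hat\mu$ for $\psi$ belongs to the subdifferential $\partial p(\varphi)$ at $\varphi\equiv\log\psi$, and then to invoke the G\^ateaux differentiability hypothesis together with Theorem~\ref{teo-gateaux-unicidade}(2) to conclude that $\partial p(\varphi)$ is a singleton. Since all equilibrium state projections then lie in this single point, they must coincide; nonemptiness of the set $\{\Phi(\hat\mu)\}$ is already supplied by the Existence of Equilibrium States theorem.

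First I would fix an equilibrium state $\hat\mu\in\HR$ and set $\nu=\Phi(\hat\mu)$, obtained from the disintegration $d\hat\mu(x,\theta)=d\mu_x(\theta)\,d\nu(x)$ with respect to $T$. By definition of equilibrium state and Lemma~\ref{pressure alternative sup},
\[
\hv(\hat\mu)+\int_{\X}\log\psi\,d\nu = P(\psi) = p(\varphi),
\qquad\text{i.e.}\qquad
\hv(\hat\mu) = p(\varphi)-\int_{\X}\varphi\,d\nu .
\]
The crucial step is the subgradient inequality. For an arbitrary $\eta\in C(\X,\mathbb{R})$, apply the variational formula of Lemma~\ref{pressure alternative sup} to the positive potential $e^{\eta}$, using $\hat\mu$ itself as a (generally non-optimal) test measure in the supremum over $\HR$ (which is legitimate, since $\HR$ does not depend on the potential):
\[
p(\eta) = P(e^{\eta}) = \sup_{\hat\lambda\in\HR}\left\{\hv(\hat\lambda)+\int_{\X}\eta\,d\lambda\right\}
\ \ge\ \hv(\hat\mu)+\int_{\X}\eta\,d\nu .
\]
Substituting the expression for $\hv(\hat\mu)$ gives
\[
p(\eta)\ \ge\ p(\varphi)+\int_{\X}(\eta-\varphi)\,d\nu \ =\ p(\varphi)+\nu(\eta-\varphi),
\]
which is exactly the defining inequality for $\nu\in\partial p(\varphi)$.

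Finally, $p$ is convex and finite-valued, and by hypothesis it is G\^ateaux differentiable at $\varphi$; hence Theorem~\ref{teo-gateaux-unicidade}(2) gives that $\partial p(\varphi)$ is a singleton. Combining this with the previous paragraph: if $\hat\mu_1,\hat\mu_2$ are any two equilibrium states for $\psi$, then $\Phi(\hat\mu_1),\Phi(\hat\mu_2)\in\partial p(\varphi)$, so $\Phi(\hat\mu_1)=\Phi(\hat\mu_2)$. Since the set of equilibrium states is nonempty, this yields $\#\{\Phi(\hat\mu):\ \hat\mu\ \text{is an equilibrium state for}\ \psi\}=1$.

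I do not expect a genuine obstacle here; the only points needing care are purely bookkeeping — confirming that the same probability $\nu$ appears both in the entropy term $\hv(\hat\mu)$ and in $\int_{\X}\log\psi\,d\nu$ (it does, being the first-coordinate marginal fixed by the disintegration), and that $\hat\mu$ is an admissible competitor in the variational formula for $p(\eta)$ for every $\eta$. Both follow directly from the definitions in Section~\ref{entropy-and-pressure} and the disintegration setup of Section~\ref{sec-holonomic}.
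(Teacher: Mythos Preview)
Your proof is correct and follows essentially the same route as the paper: show that the $X$-marginal of any equilibrium state lies in $\partial p(\varphi)$ via the variational formula for the pressure, then invoke Theorem~\ref{teo-gateaux-unicidade} together with the G\^ateaux differentiability hypothesis to conclude the subdifferential is a singleton. The only cosmetic difference is that you verify the subgradient inequality $p(\eta)\ge p(\varphi)+\nu(\eta-\varphi)$ directly, whereas the paper first derives $\nu(\eta)\le d^{+}p(\varphi)(\eta)$ and then appeals to part~(1) of Theorem~\ref{teo-gateaux-unicidade}; these are equivalent one-line computations.
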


\begin{proof}
Suppose that $\hat{\mu}$ is an equilibrium state for $\psi$. Then
we have from the definition of the pressure that
\begin{align*}
p(\varphi+t\eta) -p(\varphi)
&=
P(\psi\exp(t\eta))-P(\psi)
\\
&\geq
h_{v}(\hat{\mu})+\int_{\X} \ln \psi\ d\mu + \int_{\X} t\eta\, d\mu
-
h_{v}(\hat{\mu})-\int_{\X} \ln \psi\ d\mu
\\
&=
t\int_{\X} \eta\, d\mu.
\end{align*}
Since we are assuming that $p$ is G\^ateaux differentiable at $\varphi$, it
follows from the above inequality that $\mu(\eta)\leq d^{+}p(\varphi)(\eta)$
for all $\eta\in C(\X,\mathbb{R})$. From this inequality and
Theorem \ref{teo-gateaux-unicidade}
we can conclude  that $\partial p(\varphi)=\{\mu\}$.
Therefore for all equilibrium state $\hat{\mu}$ for $\psi$
we have $\Phi(\hat{\mu})=\partial p(\varphi)$, thus finishing the proof.
\end{proof}

\section{Possible Application in Economics}\label{application_economics}

In Gupta et al.~\cite{gupta2021study} a chaos game is used to represent a time series as a PC plot and compare similarities and dissimilarities in different  time frame such as the global pandemic of COVID-19. More precisely, the author consider the set $X=[0,1]^2$ as the base space and the four linear contractions
\begin{equation}\label{eq:IFS_gupta}
	\left\{
	\begin{array}{l}
		\tau_{A}(x,y)=(0.5 x ,0.5 y )  \\
		\tau_{B}(x,y)=(0.5 x+0.5,0.5 y ) \\
		\tau_{C}(x,y)=(0.5 x ,0.5 y+0.5) \\
		\tau_{D}(x,y)=(0.5 x+0.5,0.5 y+0.5)
	\end{array}
	\right.
\end{equation}
so $(X, \tau_\theta)_{\theta \in \Theta}, \Theta=\{A,B,C,D\}$, is a classic contractive system whose attractor is $X$ itself. Consider the identification:\\
A – if the market falls more than 0.01\% of the previous value,\\
B – if the market falls less than 0.01\% of the previous value, \\
C – if the market gains less than 0.01\% of the previous value and \\
D – if the market gains more than 0.01\% of the previous value,\\
in this way the time series of length $N$ associated to a certain economic indicator is translated in to a genetic sequence
$$\gamma=(DACCADCDACDC...AACCBADD) \in \Theta^N.$$
Fixed an arbitrary initial point $Z_0=(x_0,y_0)=(0.5, 0.5)$ the chaos game consists in iterating $(x_0,y_0)$ by each map $Z_1=(x_1,y_1)=\tau_{D}(x_0,y_0), Z_2=(x_2,y_2)=\tau_{A}(x_1,y_1), Z_3=(x_3,y_3)=\tau_{C}(x_2,y_2),....$ according to $\gamma$. Considering $M\geq 2$ and the diadic partition of $X$ given by $$\bigcup_{\gamma' \in \Theta^M} \tau_{\gamma'_{M}}( \cdots (\tau_{\gamma'_{1}}(X))),$$
the PC plot $W$ is a grey scale picture where the color of the each individual part $\Lambda=\tau_{\gamma'_{M}}( \cdots (\tau_{\gamma'_{1}}(X)))$ is the frequency of visits of the chaos game orbit $\{Z_j, j\geq 0\}$ to $\Lambda$ that is,
$$W(\Lambda)= \frac{1}{N} \, \sharp \{j=0,...,N-1\, | \, Z_j \in \Lambda\} \sim \mu(\Lambda).$$
Obviously, $\nu_{N}=\sum_{\Lambda} W(\Lambda) \delta_{(x_{\Lambda}, \, y_{\Lambda})}$, where $(x_{\Lambda}, \, y_{\Lambda}) \in \Lambda$ is arbitrary, is a discrete probability and, if $\mu(\partial(\Lambda))=0$ then by the EET (\cite{elton1987ergodic}, Corollary 2), when $N \to \infty$,  $\nu_{N}$ converge in distribution to the invariant measure $\mu$ for the IFS with probabilities $(X, \tau_\theta, p_\theta)_{\theta \in \Theta}$, where $p_A, p_B, p_C, p_D$  are the relative frequency of each symbol $A, B, C, D$ in $\gamma$, respectively.\\
For instance, if $N=100$ and if a certain time series produces the genetic sequence $$\gamma=( A ,  A ,  D ,  D ,  A ,  ... , A ,  D ,  B ,  C ,  C ,  B ,  A , D) \in \{A,B,C,D\}^{100},$$
we obtain the frequencies $[p_A, p_B, p_C, p_C]=[0.39, \,0.17, \,0.15,\, 0.29]$,
and considering $M=4$ we obtain the following PC plot
\begin{figure}[!ht]
	\centering
	\includegraphics[width=6cm ]{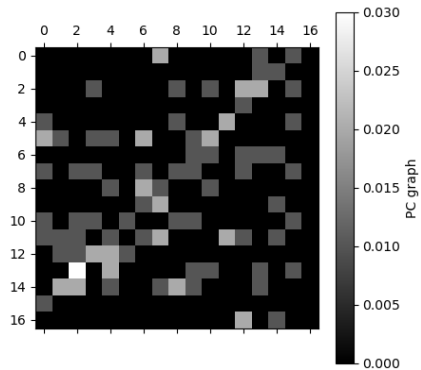}
	\caption{PC plot where each square represents one element $\Lambda$ of the diadic partition and grey scale value $0 \leq \frac{1}{N} \, \sharp \{j=0,...,N-1\, | \, Z_j \in \Lambda\} \leq 1$.}\label{fig:PCplot}
\end{figure}
which is an approximation for the invariant measure $\mu$ of the associated IFS with probabilities $[0.39, \,0.17, \,0.15,\, 0.29]$.

In order to generalize this idea we need to consider an infinite compact continuous range of values of the economic indicator, such as $\Theta=[0\%, 100\%]$, instead of taking only four values $\Theta=\{A,B,C,D\}$. Also, it is not reasonable to suppose that the probability of a change of $\theta\%$ in the indicator is independent of the current state of the indicator: the distribution of the occurrence of  $\theta \in [0\%, 100\%]$, given the current state $Z \in X$ must be a measure of probability $q_{Z}( \cdot)$ over $[0\%, 100\%]$. Therefore, we believe the theory developed in the previous sections should be used when making this generalization.

{\bf Aknowledgement:} \emph{We would like to thanks Prof. Leandro Cioletti whose contribution on the related preprint \cite{CiolOLivArxiv2017} was fundamental to establish the tools and ideas that we now generalize in our work.}

\end{document}